\title{Towards a perturbation theory for eventually positive semigroups}
\author[1]{Daniel Daners}%
\author[2]{Jochen Gl\"uck\thanks{Partially supported by a scholarship within the
    scope of the LGFG Baden-W\"urttemberg, Germany.}}%
\affil[1]{School of Mathematics and Statistics, University of Sydney,
  NSW 2006, Australia\authorcr%
  \nolinkurl{daniel.daners@sydney.edu.au}}%
\affil[2]{Institut f\"ur Angewandte Analysis, Universit\"at Ulm,
  D-89069 Ulm, Germany\authorcr%
  \nolinkurl{jochen.glueck@uni-ulm.de}}%
\date{\today}
\newtheorem{theorem}{Theorem}[section]
\newtheorem{lemma}[theorem]{Lemma}
\newtheorem{proposition}[theorem]{Proposition}
\newtheorem{corollary}[theorem]{Corollary}
\theoremstyle{definition}
\newtheorem*{definition_no_number}{Definition}
\newtheorem{example}[theorem]{Example}
\newtheorem{examples}[theorem]{Examples}
\theoremstyle{remark}
\newtheorem{remark}[theorem]{Remark}
\numberwithin{equation}{section}
\DeclareMathOperator{\id}{\mathit I}
\DeclareMathOperator{\one}{\mathbf 1}
\DeclareMathOperator{\im}{im}
\DeclareMathOperator{\spec}{\sigma}
\DeclareMathOperator{\resSet}{\rho}
\DeclareMathOperator{\Res}{\mathcal{R}}
\DeclareMathOperator{\spr}{r}
\DeclareMathOperator{\spb}{s}
\DeclareMathOperator{\repart}{Re}
\DeclareMathOperator{\Cont}{\mathcal{C}}
\newcommand{\bbN}{\mathbb{N}}
\newcommand{\bbR}{\mathbb{R}}
\newcommand{\bbC}{\mathbb{C}}
\newcommand{\calL}{\mathcal{L}}
\newcommand{\calT}{\mathcal{T}}
\newcommand{\phdot}{\mathord{\,\cdot\,}}
\let\oldthebibliography\thebibliography
\renewcommand\thebibliography[1]{
  \oldthebibliography{#1}
  \setlength{\parskip}{0pt}
  \setlength{\itemsep}{0pt plus 0.3ex}
  \small
}
\begin{document}
\maketitle

\begin{abstract}
  We consider eventually positive operator semigroups and study the
  question whether their eventual positivity is preserved by bounded
  perturbations of the generator or not.  We demonstrate that eventual
  positivity is not stable with respect to large positive perturbation
  and that certain versions of eventual positivity react quite
  sensitively to small positive perturbations. In particular we show
  that if eventual positivity is preserved under arbitrary positive
  perturbations of the generator, then the semigroup is positive. We
  then provide sufficient conditions for a positive perturbation to
  preserve the eventual positivity. Some of these theorems are
  qualitative in nature while others are quantitative with explicit
  bounds.
\end{abstract}

{ 
  \renewcommand{\thefootnote}{}%
  \footnotetext{\textbf{Mathematics Subject Classification (2010):}
    47D06, 47B65, 34G10}%
  \footnotetext{\footnotesize\textbf{Keywords:} One-parameter semigroups
    of linear operators; semigroups on Banach lattices; eventually
    positive semigroup; Perron-Frobenius theory; perturbation theory}%
}

\section{Introduction}
\label{section:introduction-preliminaries}

For positive $C_0$-semigroups, it is easy to derive basic perturbation
results. If, for instance, $A$ generates a positive $C_0$-semigroup on a
Banach lattice $E$, $B$ is a positive operator and $M$ is a
multiplication operator on $E$ (see \cite[Section~C-I-9]{Arendt1986}),
then it is not difficult to show that the semigroup generated by $A + B
+ M$ is also positive. In the present paper we study the problem whether
such a perturbation result is still true for \emph{eventually} positive
semigroups.

An \emph{eventually positive semigroup} is a $C_0$-semigroup $(e^{tA})$
on, say, a complex Banach lattice $E$ such that, for every initial value
$0 \leq f \in E$, the trajectory $e^{tA}f$ becomes positive for large
enough $t$. Motivated by applications to partial differential equations
(see e.g.\ \cite{Ferrero2008, Gazzola2008, Daners2014}; see also 
\cite{Sweers2016a} for an overview over related elliptic problems) 
and by the rapid development
of a corresponding theory in finite dimensions (see for instance
\cite{Noutsos2008, Olesky2009, Ellison2009, Erickson2015}), 
a study of
eventually positive semigroups on Banach lattices was initiated in a
series of recent papers \cite{Daners2016, Daners2016a, DanersNOTE}. In
particular, these papers clarified that there are several distinct
notions of eventual positivity such as an \emph{individual} and a
\emph{uniform} one which are worthwhile studying. For the convenience of the
reader we recall the exact definitions of these notions at the end of
the introduction as we are going to need them throughout the paper.

We shall see in this article that perturbation theory is much more
subtle for eventually positive semigroups than it is for positive
semigroups. We first demonstrate by a number of counterexamples in
Section~\ref{section:counterexamples} what is \emph{not} true. In
particular we will see that, in sharp contrast to the case of positive
semigroups, eventual positivity of a semigroup is in general lost if we
perturb its generator by a positive operator of large norm; this is 
related to a recent result of Shakeri and Alizadeh for perturbations 
of eventually positive matrices \cite[Proposition~3.6]{Shakeri2017}. Moreover,
one of our examples shows that \emph{individual} eventual positivity is
not even stable with respect to small positive perturbations. This is
the reason why we focus on \emph{uniform} eventual positivity throughout
the rest of the paper. In Section~\ref{section:resolvents} we prove
qualitative as well as quantitative perturbation results for eventually
positive resolvents of operators, and in
Section~\ref{section:semigroups} we prove qualitative and quantitative
perturbation results for $C_0$-semigroups. In the appendix we consider
rank-$1$-perturbations of linear operators and prove explicit formulas
for their resolvents and for the semigroups generated by those
operators; these formulas are needed in the main text.

It is important to note that our results are far from constituting a
complete perturbation theory for eventually positive semigroups; in
fact, we leave much more questions open than we solve. It is our hope
though that, by exposing some surprising phenomena, the present article
can serve a starting point for further research on the topic.

\paragraph*{Preliminaries}

Throughout, we use the notation and the terminology from
\cite{Daners2016, Daners2016a, DanersNOTE}. For the convenience of the
reader we recall what we need throughout the paper. We assume
familiarity with the theory of real and complex Banach lattices (see for
instance \cite{Schaefer1974, Meyer-Nieberg1991} for standard references
on this topic) and with the basic theory of $C_0$-semigroups (see for
instance \cite{Pazy1983, Engel2000, Engel2006}).

For every $\lambda \in \bbC$ and every real number $r > 0$ we denote by
$B(\lambda,r) := \{z \in \bbC\colon |z-\lambda| < r\}$ the open ball in
$\bbC$ of radius $r$.

If $E,F$ are real or complex Banach spaces, then we denote the space of
all bounded linear operators from $E$ to $F$ by $\calL(E;F)$ and we
abbreviate $\calL(E) := \calL(E;E)$. The \emph{identity operator} on $E$
is denoted by $\id_E \in \calL(E)$. For every $T \in \calL(E)$ the
\emph{spectral radius} of $T$ is denoted by $\spr(T)$. For
every densely defined linear operator $A\colon E \supseteq D(A) \to F$
we denote by $A'\colon F \supseteq D(A') \to E'$ the \emph{dual
operator} of $A$, where $E'$ and $F'$ are the \emph{dual spaces} of $E$ 
and $F$. For all $y \in F$
and all $\varphi \in E'$ we define $y \otimes \varphi \in \calL(E;F)$ by
$(y \otimes \varphi)z := \langle \varphi, z \rangle y$ for all $z \in
E$. It is easy to see that the operator norm of $y \otimes \varphi$ is
given by $\|y \otimes \varphi\| = \|y\| \|\varphi\|$. Recall that every
rank-$1$-operator in $\calL(E;F)$ is of the form $y \otimes \varphi$ for
appropriate vectors $y \in F \setminus \{0\}$ and $\varphi \in E'
\setminus \{0\}$.

Given a linear operator $A\colon E \supseteq D(A) \to E$ on a complex
Banach space $E$ we denote its \emph{spectrum} and \emph{resolvent set}
by $\spec(A)$ and $\resSet(A) := \bbC \setminus \spec(A)$,
respectively. Note that if $\resSet(A)\neq\emptyset$, then $A$ is
necessarily closed.  The \emph{spectral bound} of $A$ is given by
\begin{displaymath}
  \spb(A) := \sup \{\repart \lambda\colon \lambda \in \spec(A)\} \in
  [-\infty,\infty],
\end{displaymath}
We define the \emph{resolvent} of $A$ at $\lambda\in\resSet(A)$ by
$\Res(\lambda,A) := (\lambda I-A)^{-1}$. The resolvent
$\Res(\phdot,A)\colon \resSet(A) \to \calL(E)$ is an analytic map.  Any
pole of this analytic map is an isolated point of $\spec(A)$ and in fact
an eigenvalue of $A$; see \cite[Theorem~2 in
Section~VIII.8]{Yosida1995}. Let $\lambda_0$ be a pole of the resolvent
of $A$. We call $\lambda_0$ a \emph{geometrically simple eigenvalue} of
$A$ if the eigenspace $\ker(\lambda_0 I - A)$ is one-dimensional; we
call $\lambda_0$ an \emph{algebraically simple eigenvalue} of $A$ if the
spectral projection $P$ associated with $\lambda_0$ has one-dimensional
range. The eigenvalue $\lambda_0$ is algebraically simple if and only if
it is geometrically simple and $\im(P)=\ker(\lambda_0 I - A)$.  Also, if
$\lambda_0$ is algebraically simple, then $\lambda_0$ is a simple pole
of $\Res(\phdot,A)$. Let $A\colon E \supseteq D(A) \to E$ be a linear
operator with non-empty resolvent set on a complex Banach space $E$.  An
operator $K \in \calL(E)$ is called \emph{$A$-compact} if there is a
$\lambda_0 \in \rho(A)$ such that $K \Res(\lambda_0,A)$ is compact. By
the resolvent equation this is equivalent to $K \Res(\lambda,A)$ being
compact for every $\lambda \in \rho(A)$. Note that every compact
operator $K \in \calL(E)$ is naturally $A$-compact. Moreover, if $A$ has
compact resolvent, then every operator $K \in \calL(E)$ is $A$-compact.

A complex Banach lattice $E$ is by definition the complexification of a
real Banach lattice $E_\bbR$ which we call the \emph{real part} of
$E$. The \emph{positive cone} of a real or complex Banach lattice $E$ is
denoted by $E_+$.  A vector $f \in E$ is called \emph{positive}, which
we denote by $f \geq 0$, if $f \in E_+$.  For two elements $f,g\in E$ in
case of a real Banach lattice or $f,g\in E_\bbR$ in case of a complex
Banach lattice we write, as usual, $f \leq g$ if $g-f \geq 0$.  We write
$f < g$ if $f \leq g$ but $f \neq g$. The dual space $E'$ of a real or
complex Banach lattice $E$ is again a real or complex Banach lattice,
where a functional $\varphi \in E'$ fulfils $\varphi \geq 0$ if and only
if $\langle \varphi, x\rangle \geq 0$ for all $x \in E_+$; we denote the
positive cone in $E'$ by $E'_+ := (E')_+$.

Let $E$ be a real or complex Banach lattice and let $u \in E_+$. The 
vector subspace
\begin{displaymath}
  E_u := \{x \in E\colon\text{there exists $c \geq 0$ with $|x| \leq cu$}\}
\end{displaymath}
of $E$ is called the \emph{principal ideal} generated by $u$.
We endow $E_u$ with the \emph{gauge norm} $\|\cdot\|_u$
with respect to $u$. The gauge norm is given by
\begin{displaymath}
  \|x\|_u := \inf \{c \geq 0\colon |x| \leq cu\}
\end{displaymath}
for all $x \in E_u$ and is at least as strong as the norm induced by
$E$, usually even stronger. Moreover it renders $E_u$ a (real or complex) 
Banach lattice. A
vector $u \in E$ is called a \emph{quasi-interior point of $E_+$} if $u
\geq 0$ and if $E_u$ is dense in $E$. If, for instance, $E$ is an
$L^p$-space over a $\sigma$-finite measure space $(\Omega,\mu)$ (where
$1 \leq p < \infty$) then $0 \leq u \in E$ is a quasi-interior point of
$E_+$ if and only if $u(\omega) > 0$ for almost all $\omega \in \Omega$.

Let $u \in E_+$. We call a vector $f \in E$ \emph{strongly positive with
  respect to $u$}, which we denote by $f \gg_u 0$, if there exists a
number $\varepsilon > 0$ such that $f \geq \varepsilon u$. This
condition is equivalent to the condition $f \geq 0$ and $u \in E_f$. An
operator $T \in \calL(E)$ is called \emph{strongly positive with respect
  to $u$}, which we denote by $T \gg_u 0$, if $Tf \gg_u 0$ for all $0 <
f \in E$.

Let $E$ be a complex Banach lattice. A linear operator $A\colon E
\supseteq D(A) \to E$ is called \emph{real} if $D(A) = E_\bbR \cap D(A)
+ i E_\bbR \cap D(A)$ and if $A$ maps $E_\bbR \cap D(A)$ to
$E_\bbR$. Clearly, an operator $T \in \calT(E)$ is real if and only if
$TE_\bbR \subseteq E_\bbR$. It is easy to see that a $C_0$-semigroup
$(e^{tA})_{t \geq 0}$ on $E$ is real if and only if $A$ is real.  A
linear operator $T \in \calL(E)$ on a real or complex Banach lattice $E$
is called \emph{positive} if $TE_+ \subseteq E_+$; we denote this by $T \geq
0$. In particular such an operator is real. A $C_0$-semigroup
$(e^{tA})_{t \geq 0}$ on $E$ generated by $A$ is called \emph{positive}
if $e^{tA} \geq 0$ for all $t \geq 0$. Furthermore, given $S,T \in
\calL(E)$ we write $S \leq T$ if $S$ and $T$ are both real operators and
if $T-S\geq 0$.

A real operator $T \in \calL(E)$ is called a \emph{multiplication
  operator} if there exists a number $c \geq 0$ such that $-c\id_E \leq
T \leq c\id_E$; it is also possible to define non-real multiplication
operators, but we have no need for this in the present article. All
multiplication operators on a Banach lattice constitute a vector space
which is usually called the \emph{center} of the Banach lattice; see for
instance \cite[Section~3.1]{Meyer-Nieberg1991} for more information.  We
recall how real multiplication operators can be characterised on two
important classes of complex Banach lattices, also explaining the name
``multiplication operator''.  Let $(\Omega,\mu)$ be a
$\sigma$-finite measure space and $K$ a compact Hausdorff space. Then
the real operator $T$ is a multiplicaton operator on $E=L^p(\Omega)$
with $1\leq p <\infty$ or $E=\Cont(K;\bbC)$ if and only if there exists
a function $h\in L^\infty(\Omega,\mu; \bbR)$ or $h \in \Cont(K;\bbR)$
respectively such that $Tf = hf$ for all $f \in E$.

\paragraph*{Notions of eventual positivity}

As in \cite{Daners2016, Daners2016a, DanersNOTE} we consider 
eventual positivity for resolvents of linear operators as well as for
$C_0$-semigroups. For the convenience of the reader we recall the
most important definitions now. First we recall several notions of
eventual positivity for resolvents:

\begin{definition_no_number}
  Let $A\colon E \supseteq D(A) \to E$ be a linear operator on a complex
  Banach lattice $E$ and let $\lambda_0 \in [-\infty,\infty)$ be either
  a spectral value of $A$ or $-\infty$.
  \begin{enumerate}
  \item[(a)] The resolvent of $A$ is called \emph{individually
      eventually positive at $\lambda_0$} if, for every $0 \leq f\in E$,
    there exists a real number $\lambda_1 > \lambda_0$ such that
    $(\lambda_0, \lambda_1] \subseteq \resSet(A)$ and such that
    $\Res(\lambda,A)f \geq 0$ for all $\lambda \in (\lambda_0,
    \lambda_1]$.
  \item[(b)] The resolvent of $A$ is called \emph{uniformly eventually
      positive at $\lambda_0$} if it is individually eventually positive
    at $\lambda_0$ and if the number $\lambda_1$ in (a) can be chosen to
    be independent of $f$.
  \end{enumerate}
  Now assume in addition that $u$ is a quasi-interior point of $E_+$.
  \begin{enumerate}
  \item[(c)] The resolvent of $A$ is called \emph{individually
      eventually strongly positive with respect to $u$ at $\lambda_0$}
    if, for every $0 < f\in E$, there exists a real number $\lambda_1 >
    \lambda_0$ such that $(\lambda_0, \lambda_1] \subseteq \resSet(A)$
    and such that $\Res(\lambda,A)f \gg_u 0$ for all $\lambda \in
    (\lambda_0, \lambda_1]$.
  \item[(d)] The resolvent of $A$ is called \emph{uniformly eventually
      strongly positive with respect to $u$ at $\lambda_0$} if it is
    individually eventually strongly positive with respect to $u$ at
    $\lambda_0$ and if the number $\lambda_1$ in (c) can be chosen to be
    independent of $f$.
  \end{enumerate}
\end{definition_no_number}
Note that one can also define various versions of \emph{eventual
  negativity} of a resolvent as was for instance done in
\cite[Definition~4.2]{Daners2016a}. We will, however, not discuss this
notion in detail here; it probably suffices to remark that all
perturbation results that we prove for eventually positive resolvents
have analogues for eventually negative resolvents (with similar proofs).

The most interesting case in the above definition is the case $\lambda_0
= \spb(A)$.  In fact, eventual positivity of the resolvent of $A$ at the
spectral bound is closely related to eventual positivity of the
semigroup (see for instance \cite[Theorem~1.1]{Daners2016a}.  Various
version of eventual positivity of a semigroup can be found in the
subsequent definition.
\begin{definition_no_number}
  Let $(e^{tA})_{t \geq 0}$ be a $C_0$-semigroup on a complex Banach
  lattice $E$.
  \begin{enumerate}
  \item[(a)] The semigroup is called \emph{individually eventually
      positive} if, for every $0 \leq f \in E$, there exists a time $t_0
    \geq 0$ such that $e^{tA}f \geq 0$ for all $t \geq t_0$.
  \item[(b)] The semigroup is called \emph{uniformly eventually
      positive} if it is individually eventually positive and if the
    time $t_0$ from (a) can be chosen to be independent of $f$.
  \end{enumerate}
  Now assume in addition that $u$ is a quasi-interior point of $E_+$.
  \begin{enumerate}
  \item[(c)] The semigroup is called \emph{individually eventually
      strongly positive with respect to $u$} if, for every $0 < f \in
    E$, there exists a time $t_0 \geq 0$ such that $e^{tA}f \gg_u 0$ for
    all $t \geq t_0$.
  \item[(d)] The semigroup is called \emph{uniformly eventually strongly
      positive withh respect to $u$} if it is individually eventually
    strongly positive with respect to $u$ and if the time $t_0$ form (c)
    can be chosen to be independent of $f$.
  \end{enumerate}
\end{definition_no_number}
It was demonstrated in \cite[Examples~5.7 and~5.8]{Daners2016} that
individual eventual strong positivity does not in general imply uniform
eventual positivity (neither for resolvents nor for semigroups). In
finite dimensions however, each of the above individual notions
coincides with its uniform counterpart and we shall thus only speak of
\emph{eventual positivity} and \emph{eventual strong positivity} if we
work on finite dimensional Banach lattices (where the quasi-interior
point $u$ is not mentioned explicitly in the latter notion since the
question whether a resolvent or a semigroup is eventually strongly
positive with respect to $u$ does not depend on $u$ in finite
dimensions).

In the present paper we mainly deal with eventual strong positivity with
respect to a given quasi-interior point $u$ (which is much easier to
characterise than mere eventual positivity, as observed in
\cite[Examples~7.1]{Daners2016a}). Mere eventual positivity will,
however, occur in several counterexamples in this article.

\section{Losing eventual positivity under positive perturbations}%
\label{section:counterexamples}%
If $(e^{tA})_{t \geq 0}$ is a positive $C_0$-semigroup on a complex
Banach lattice $E$ (meaning that $e^{tA} \geq 0$ for all $t \geq 0$) and
$B \in \calL(E)$ is a positive operator, then it follows easily from the
Dyson--Phillips series (see e.g.\ \cite[Theorem~III.1.10]{Engel2000})
that the perturbed semigroup $(e^{t(A+B)})_{t \geq 0}$ is positive,
too. If, on the other hand, $B \in \calL(E)$ is not necessarily
positive, but real and a multiplication operator, then we can also
conclude that $(e^{t(A+B)})_{t \geq 0}$ is positive. Indeed, we have $B +
c \geq 0$ for a sufficiently large number $c \geq 0$ and hence,
\begin{displaymath}
  e^{t(A+B)} = e^{-c} e^{t(A+B+c)} \geq 0
\end{displaymath}
for all $t \geq 0$. It is the purpose of the current section to
demonstrate that matters are much more complicated for eventually
positive semigroups. In the first subsection we show how eventual
positivity of the semigroup can get lost if we perturb $A$ by a
sufficiently large positive operator. In the second
subsection we demonstrate that \emph{individual} eventual positivity can
be destroyed by positive perturbations of arbitrarily small norm.

\subsection{Large perturbations}
\label{sec:largep}
It was recently demonstrated by Shakeri and Alizadeh 
\cite[Proposition~3.6]{Shakeri2017} that eventual strong positivity of a matrix 
can always destroyed be a positive perturbation, unless the original matrix
was positive itself. A similar phenomenon occurs for $C_0$-semigroups.
We first illustrate this by a concrete three dimensional example
(Example~\ref{ex:positive-perturbation}). Afterwards we prove a general 
theorem which shows that the situation is similar in infinite
dimensions (Theorem~\ref{thm:negative-result-for-large-perturbations}).

Let us now begin by studying a simple three dimensional matrix $A$
that generates an eventually strongly positive semigroup on $\bbC^3$.
We will show that the eventual positivity is destroyed if we perturb
$A$ by a certain positive multiplication operator (i.e.\ by a diagonal
matrix whose entries are all $\geq 0$). Our example is a manifestation of
the fact that certain sign patterns may or may not lead to eventual
positivity as extensively discussed in \cite{Berman2009,Erickson2015}
and references therein.

\begin{example}
  \label{ex:positive-perturbation}
  We consider the symmetric matrix
  \begin{equation}
    \label{eq:Amatrix}
    A=
    \begin{bmatrix}
      -2&-1&3\\
      -1&-2&3\\
      3& 3&-6
    \end{bmatrix}
  \end{equation}
  whose spectrum is $\sigma(A)=\{0,-1,-9\}$ and whose corresponding
  eigenvectors
  \begin{displaymath}
    u_1=\frac{1}{\sqrt{3}}
    \begin{bmatrix}
      1\\1\\1
    \end{bmatrix},\qquad u_2=\frac{1}{\sqrt{2}}
    \begin{bmatrix}
      1\\-1\\0
    \end{bmatrix}\quad\text{and}\quad
    u_3=\frac{1}{\sqrt{6}}
    \begin{bmatrix}
      1\\1\\-2
    \end{bmatrix},
  \end{displaymath}
  form an orthonormal basis in $\bbC^3$. Hence $0$ is the dominant
  eigenvalue of $A$; the corresponding eigenspace $\ker A$ is
  one-dimensional and contains an eigenvector whose entries are all
  strictly positive. It thus follows from
  \cite[Theorem~6.7]{Daners2016a} that the semigroup $(e^{tA})_{t\geq
    0}$ is eventually strongly positive. Yet, the semigroup is not
  positive because $A$ has negative entries outside the diagonal.  We
  now show that a self-adjoint rank-1 perturbation of the form $sB$ with
  $s>0$ and
  \begin{displaymath}
    B:=
    \begin{bmatrix}
      0&0&0\\
      0&1&0\\
      0&0&0
    \end{bmatrix}
  \end{displaymath}
  destroys the eventual positivity if $s>4$. Indeed, it is easily
  verified that
  \begin{displaymath}
    v=
    \begin{bmatrix}
      0\\3\\1
    \end{bmatrix}
  \end{displaymath}
  is an eigenvector of $(A+4B)$ corresponding to the eigenvalue
  $3$. Computing the other eigenvalues we obtain
  \begin{displaymath}
    \sigma(A+4B)=\Bigl\{3,-\frac{1}{2}\Bigl(9\pm\sqrt{65}\Bigr)\Bigr\},
  \end{displaymath}
  so $3$ is the dominant eigenvalue. For $s=4$ the eigenfunction is not
  strongly positive any more, and we will show that by choosing $s>4$
  the positivity is lost entierly.

  Since all eigenvalues are simple, it follows from standard
  perturbation theory that there exists a curve $\lambda(s)$ and vectors
  $u(s)\neq 0$ depending analytically on $s$ in an open interval $J$
  containing $s=4$, such that $\lambda(s)u(s)=(A+sB)u(s)$ for all $s\in
  J$ with initial conditions $\lambda(4)=3$ and $u(4)=v$; see
  \cite[Section~II.1.7]{Kato1976}. Differentiating the above equation
  with respect to $s$ yields
  \begin{equation}
    \label{eq:evp-diff}
    \lambda'(s)u(s)+\lambda(s)u'(s)=Bu(s)+(A+sB)u'(s).
  \end{equation}
  Taking the inner product of \eqref{eq:evp-diff} with $u(s)$ and using
  the symmetry of $A+sB$ we see that
  \begin{multline*}
    \lambda'(s)\|u(s)\|^2+\lambda(s)\langle u'(s),u(s)\rangle
    =\langle Bu(s),u(s)\rangle+\langle (A+sB)u'(s),u(s)\rangle\\
    =\langle Bu(s),u(s)\rangle+\langle u'(s),(A+sB)u(s)\rangle =\langle
    Bu(s),u(s)\rangle+\lambda(s)\langle u'(s),u(s)\rangle
  \end{multline*}
  and so
  \begin{displaymath}
    \lambda'(s)=\frac{\langle Bu(s),u(s)\rangle}{\|u(s)\|^2}
  \end{displaymath}
  for all $s\in J$. If we apply this to $s=4$ we obtain
  \begin{equation}
    \label{eq:evp-diff-4}
    \lambda'(4)
    =\frac{\langle Bv,v\rangle}{\|v\|^2}
    =\frac{9}{10}.
  \end{equation}
  To compute $w:=u'(4)$ we rearrange \eqref{eq:evp-diff} to get
  \begin{displaymath}
    \bigl(A+sB-\lambda(s)I\bigr)u'(s)=\bigl(\lambda'(s)I-B\bigr)u(s).
  \end{displaymath}
  Setting $s=4$ and making use of \eqref{eq:evp-diff-4}, we need to
  solve
  \begin{displaymath}
    (A+4B-3I)w=\Bigl(\frac{9}{10}I-B\Bigr)v.
  \end{displaymath}
  Substituting the matrices $A$ and $B$ we seek
  $w=(w_1,w_2,w_3)\in\bbR^3$ so that
  \begin{displaymath}
    \begin{bmatrix}
      -5&-1&3\\
      -1&-1&3\\
      3&3&-9
    \end{bmatrix}
    \begin{bmatrix}
      w_1\\w_2\\w_3
    \end{bmatrix}
    =\frac{1}{10}
    \begin{bmatrix}
      9&0&0\\
      0&-1&0\\
      0&0&9\\
    \end{bmatrix}
    \begin{bmatrix}
      0\\3\\1
    \end{bmatrix}
    =\frac{1}{10}
    \begin{bmatrix}
      0\\-3\\9
    \end{bmatrix}.
  \end{displaymath}
  Solving this equation we see that
  \begin{displaymath}
    w=u'(4)=\frac{1}{40}
    \begin{bmatrix}
      -3\\15\\0
    \end{bmatrix}
    +\tau
    \begin{bmatrix}
      0\\3\\1
    \end{bmatrix},
  \end{displaymath}
  for some $\tau\in\bbR$. Regardless of the value of $\tau$, the first
  component of $u(s)$ has a negative derivative at $s=4$, which means
  that first component changes sign from positive to negative at
  $s=4$. Hence the eigenvector $u(s)$ of the dominant eigenvalue
  $\lambda(s)$ is not positive (or negative) for $s$ in some interval
  $(4,4+\varepsilon)$, where $\varepsilon>0$. Hence, the semigroup
  $(e^{t(A+sB)})_{t\geq 0}$ is \emph{not} eventually positive for $s\in
  (4,4+\varepsilon)$. This follows for instance from
  \cite[Theorem~7.7(i)]{Daners2016}.
\end{example}

Next we look at the above example in a different way.

\begin{example}
  \label{ex:evpos-perturbation}
  Clearly the matrix
  \begin{displaymath}
    C_{a,s}=
    \begin{bmatrix}
      a&a&a\\
      a&s&a\\
      a&a&a
    \end{bmatrix}
  \end{displaymath}
  generates a strongly positive semigroup $(e^{tC_{a,s}})_{t\geq 0}$ on
  $\bbC^3$ for every $a,s>0$.  Let $A$ be given by
  \eqref{eq:Amatrix}. By Example~\ref{ex:positive-perturbation}
  $(e^{tA})_{t\geq 0}$ is eventually strongly positive but not
  positive. We have also seen in Example~\ref{ex:positive-perturbation}
  that for $a=0$ the semigroup $(e^{t(C_{a,s}+A)})_{t\geq 0}$ is not
  eventually positive for suitable choice of $s>4$. The reason is that
  the eigenvector corresponding to the dominant eigenvalue has strictly
  positive and strictly negative components. Having chosen such $s>4$,
  the continuous dependence of the eigenvalues and eigenvectors on the
  coefficients of a matrix shows that we can choose $a>0$ such that
  $(e^{t(C_{a,s}+A)})_{t\geq 0}$ is not eventually positive.

  Hence we have the generator $C_{a,s}$ of a strongly positive semigroup
  and a bounded operator $A$ generating an eventually strongly positive
  semigroup, but the semigroup generated by $C_{a,s}+A$ does not exhibit
  any positivity properties.
\end{example}

The above example demonstrates that strong positivity of a semigroup might 
be destroyed if the generator is perturbed by the generator of an eventually 
strongly positive semigroup; compare also \cite[Theorem~3.5]{Shakeri2017}.

We close this subsection with a general result asserting that, under
certain technical assumptions, eventual strong positivity of a semigroup
with respect to a quasi-interior point $u$ is \emph{always} unstable
under suitable large positive perturbation unless the semigroup is
positive. Recall from \cite[Theorem~7.6]{Daners2016} that, if
$(e^{tA})_{t \geq 0}$ is an eventually positive $C_0$-semigroup and the
spectrum $\spec(A)$ is non-empty, then the spectrum contains the
spectral bound $\spb(A)$. A finite dimensional analogue of the following
theorem, which deals with powers of matrices rather than with time
continuous semigroups, can be found in
\cite[Proposition~3.6]{Shakeri2017}.

\begin{theorem}
  \label{thm:negative-result-for-large-perturbations}
  Let $E$ be a complex Banach lattice and let $(e^{tA})_{t \geq 0}$ be a
  real $C_0$-semigroup on $E$ which is individually eventually strongly
  positive with respect to a quasi-interior point $u$ of $E_+$. Suppose
  that $\spb(A)$ is not equal to $-\infty$ and that it is a pole of
  $\Res(\phdot,A)$. Then the following assertions are equivalent:
  \begin{enumerate}[\upshape (i)]
    \item For every positive operator $B \in \calL(E)$ the perturbed
      semigroup $(e^{t(A+B)})_{t \geq 0}$ is individually eventually
      positive.
    \item For every positive rank-$1$ operator $B \in \calL(E)$ the perturbed
      semigroup $(e^{t(A+B)})_{t \geq 0}$ is individually eventually
      positive.
    \item The semigroup $(e^{tA})_{t \geq 0}$ is positive.
  \end{enumerate}
\end{theorem}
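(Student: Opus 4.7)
The implication (iii)$\,\Rightarrow\,$(i) is the classical Dyson--Phillips observation that adding a positive bounded operator to the generator of a positive $C_0$-semigroup again yields a positive semigroup (see e.g.\ \cite[Theorem~III.1.10]{Engel2000}), and (i)$\,\Rightarrow\,$(ii) is trivial since every positive rank-$1$ operator is a positive operator. The real content is (ii)$\,\Rightarrow\,$(iii), which I would prove by contraposition: assuming $(e^{tA})_{t\ge 0}$ is not positive, I construct a positive rank-$1$ operator $B$ so that $(e^{t(A+B)})_{t\ge 0}$ is not individually eventually positive.

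From the Laplace transform $\Res(\mu,A)=\int_0^\infty e^{-\mu t}e^{tA}\,dt$ (valid for $\mu>\spb(A)$), non-positivity of the semigroup supplies $h\in E_+$ and $\mu_*>\spb(A)$ with $g:=\Res(\mu_*,A)h\not\ge 0$. Individual eventual strong positivity of the resolvent at $\spb(A)$, which transfers from the semigroup, yields $\Res(\mu,A)h\gg_u 0$ for $\mu$ just above $\spb(A)$; by continuity of $\Res(\phdot,A)h$ I may take $\mu_*$ close to the smallest value at which positivity of $\Res(\mu,A)h$ first fails, thereby arranging that the positive part $g^+$ is non-trivial. A band-projection argument then gives $\psi\in E'_+$ with $\langle\psi,g\rangle>0$. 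Setting $\alpha:=1/\langle\psi,g\rangle>0$ and $B:=\alpha\, h\otimes\psi$, a positive rank-$1$ operator, the rank-$1$ resolvent identity from the appendix gives
\begin{displaymath}
\Res(\mu,A+B)h \;=\; \frac{\Res(\mu,A)h}{1-\alpha F(\mu)},
\qquad F(\mu):=\langle\psi,\Res(\mu,A)h\rangle,
\end{displaymath}
so that $\mu_*$ is a simple pole of $\Res(\phdot,A+B)$ and an eigenvalue of $A+B$ with eigenvector a scalar multiple of the non-positive vector $g$.

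Provided $\mu_*=\spb(A+B)$, the Perron-type characterisation from \cite[Theorem~7.7(i)]{Daners2016} (used in Example~\ref{ex:positive-perturbation}) rules out individual eventual positivity of $(e^{t(A+B)})_{t\ge 0}$, giving the desired contradiction. The crux of the proof is therefore the identification $\mu_*=\spb(A+B)$. By the rank-$1$ spectral analysis of the appendix, the spectral values of $A+B$ outside $\sigma(A)$ are zeros of $1-\alpha F$; since $\sigma(A)\subseteq\{\repart z\le\spb(A)\}$ and $\spb(A)<\mu_*$, the task reduces to excluding zeros of $1-\alpha F$ in the open half-plane $\{\repart\mu>\mu_*\}$. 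I expect this to require finer tuning of the witness $\mu_*$ and of $\psi$ (e.g.\ taking $\psi$ as a band projection concentrated on $g^+$, which should make $F(\mu_*)$ close to a local extremum of $F$), together with the decay $F(\mu)=\langle\psi,h\rangle/\mu+O(\mu^{-2})$ at infinity to exclude further real solutions, and modulus comparisons on vertical lines to exclude complex solutions; this is where I expect the principal technical difficulty to lie.
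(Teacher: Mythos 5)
Your outer structure is sound and your choice to tackle (ii)$\Rightarrow$(iii) is correct, but the proof has a genuine gap that you yourself flag: you never establish $\mu_* = \spb(A+B)$. With $B = \alpha\,h\otimes\psi$ (image along an arbitrary positive vector $h$), the zeros of $1 - \alpha F$ where $F(\mu) = \langle\psi,\Res(\mu,A)h\rangle$ are essentially uncontrolled in the half-plane $\{\repart\mu > \mu_*\}$; the "finer tuning" and "modulus comparisons" you gesture at are not just technicalities, they are where the proof would actually have to take place. Without pinning down the dominant spectral value of $A+B$, the Perron-type characterisation you invoke does not apply, so the contradiction never materialises. There is also a secondary worry in the same vein: you need the pole at $\mu_*$ to be simple so that the residue of $\Res(\phdot,A+B)h$ is a nonzero multiple of $g$ and hence spans $\im P_{\mu_*}$, and that $F'(\mu_*)\neq 0$ is again not automatic.

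The paper sidesteps all of this, and proceeds in the opposite logical direction. Rather than arguing by contraposition, it shows (ii) directly forces $\Res(\mu,A)\geq 0$ for every $\mu>\spb(A)$ (which yields (iii) by the standard characterisation of generators of positive semigroups). The crucial structural choice is to take the \emph{image} of the rank-$1$ perturbation along the Perron eigenvector $v\gg_u 0$ of $A$: with $B=\alpha\,\varphi\otimes v$ and $\alpha\langle\varphi,v\rangle=\mu$, Proposition~\ref{prop:rank-1-pert-special-case}(a) gives an exact description of $\spec(A+B)$ in the half-plane $\{\repart\lambda>\spb(A)\}$, namely the single point $\mu$, which is a first-order pole of $\Res(\phdot,A+B)$ with spectral projection $Q=(\Res(\mu,A)'\varphi)\otimes v$. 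Individual eventual positivity of the perturbed semigroup then forces $Q\geq 0$ by \cite[Corollary~7.3]{Daners2016}, i.e.\ $\Res(\mu,A)'\varphi\geq 0$, and letting $\varphi$ run over $E'_+$ gives $\Res(\mu,A)\geq 0$. So the missing idea in your attempt is precisely this: use the eigenvector $v$ of $A$ as the image of $B$, which turns the otherwise intractable localisation of $\spb(A+B)$ into an explicit computation.
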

\begin{proof}
  We may assume that $\spb(A) = 0$. Obviously, (iii) implies (i) and (i)
  implies (ii). To show ``(ii) $\Rightarrow$ (iii)'', assume that 
  $(e^{t(A+B)})_{t \geq 0}$ is individually eventually positive for
  every positive rank-$1$ operator $B \in \calL(E)$.
  
  It suffices to prove that
  $\Res(\mu,A) \geq 0$ for all $\mu > 0$. To this end, fix an
  arbitrary real number $\mu > 0$ and an arbitrary functional $0
  <\varphi \in E'$. We show that $\Res(\mu,A)'\varphi \geq 0$.
	
  Since the spectral value $\spb(A)=0$ is a pole of $\Res(\phdot,A)$, 
  it is an eigenvalue
  of $A$ \cite[Theorem~2 in Section~VIII.8]{Yosida1995}, and it follows
  from \cite[Theorem~5.1]{DanersNOTE} and \cite[Corollary~3.3]{Daners2016a} 
  that $A$ admits an eigenvector $v \gg_u 0$ for the eigenvalue $0$. Since
  $u$ is a quasi-interior point of $E_+$, so is $v$ and hence we have
  $\langle \varphi, v \rangle > 0$. We can thus find a scalar $\alpha >
  0$ such that $\alpha \langle \varphi, v \rangle = \mu$.
	
  Define $B := \alpha \varphi \otimes v \in \calL(E)$. As $B$ is a
  positive rank-$1$ operator, the semigroup $(e^{t(A+B)})_{t \geq 0}$ is by
  assumption individually eventually positive. It follows from
  Proposition~\ref{prop:rank-1-pert-special-case}(a) that
  $\spb(A+B)=\alpha\langle\varphi,v\rangle = \mu$, that this number
  is a first order pole of the resolvent $\Res(\phdot,A+B)$ and that the
  corresponding spectral projection $Q$ is given by $Q = (\varphi \otimes
  v) \Res(\mu,A) = \bigl(\Res(\mu,A)'\varphi\bigr) \otimes v$.
  
  Since $(\lambda - \mu)\Res(\lambda,A+B) \to Q$ with respect to the operator
  norm as $\lambda \downarrow 
  \mu$ and since the semigroup generated by $A+B$ is individually
  eventually positive, it follows from \cite[Corollary~7.3]{Daners2016}
  that $Q \ge 0$. Thus, we conclude that $\bigl(\Res(\mu,A)'
  \varphi\bigr) \otimes v \ge 0$ and hence, $\Res(\mu,A)'\varphi \ge 
  0$, as claimed.
\end{proof}

\subsection{Small perturbations}
In this subsection we demonstrate that \emph{individual} eventual
positivity is very unstable with respect to small perturbations. The
following example shows that it can be destroyed by positive
perturbations of arbitrarily small norm. To do all necessary
computations in our example we need a few formulas for
rank-$1$-perturbations which can be found in the appendix of the paper.
On any given set $S$ we denote the constant function $S \to \bbR$ with
value $1$ by $\one$.
\begin{example}
  \label{ex:infinite-dimensional}
  On the Banach lattice $E = C([-1,1])$ there exist a bounded linear
  operator $A$ and a positive rank-$1$-projection $K$ with the following
  properties:
  \begin{enumerate}[\upshape (a)]
  \item The spectral bound $\spb(A)$ equals $0$, is a dominant spectral
    value of $A$ and a first order pole of the resolvent $\Res(\phdot,A)$.
    
    For every $\alpha > 0$ the spectral bound $\spb(A+\alpha K)$ equals
    $\alpha$, is a dominant spectral value of $A + \alpha K$ and a first
    order pole of the resolvent $\Res(\phdot,A+\alpha K)$.
  \item The resolvent $\Res(\phdot,A)$ is individually but not uniformly
    eventually strongly positive with respect to $\one$ at $0$.
    
    Moreover, the semigroup $(e^{tA})_{t \geq 0}$ is individually but
    not uniformly eventually strongly positive with respect to $\one$.
  \item The resolvent $\Res(\phdot, A+\alpha K)$ is not individually
    eventually positive at $\spb(A+\alpha K)$ for any $\alpha > 0$.
    
    Moreover, the semigroup $(e^{t(A+\alpha K)})_{t \geq 0}$ is not
    individually eventually positive for any $\alpha > 0$.
  \end{enumerate}
  To prove this, we choose $A$ to be the same operator which was
  constructed in \cite[Example~5.7]{Daners2016}. For the convenience of the
  reader we briefly recall this construction:
	
  Let $\varphi \in E'$ be the functional given by $\langle \varphi, f
  \rangle = \int_{-1}^1 f dx$ for every $f \in E$ and let $F = \ker
  \varphi$. Then we have $E = \langle \one \rangle \oplus F$, where
  $\one$ denotes the constant function with value $1$ and $\langle \one
  \rangle$ is its span. Let $S \in \calL(F)$ be the \emph{reflection
    operator} given by $(Rf)(\omega) = f(-\omega)$ for every $f \in F$
  and every $\omega \in [-1,1]$ and let $A \in \calL(E)$ be given by
  \begin{displaymath}
    A = 0_{\langle \one \rangle} \oplus (-2\id_F - S).
  \end{displaymath}
  We define $K := \one \otimes \delta_{-1}$, where $\delta_{-1}$ is the
  Dirac functional $\delta_{-1}\colon f \mapsto f(-1)$ on $E$.  Hence,
  we have $Kf = f(-1)\one$ for every $f \in E$. Obviously, $K$ is a
  positive rank-$1$-projection. Let us now show that the properties
  (a)--(c) are fulfilled.

  (a) Since $\sigma(S) = \{-1,1\}$, we conclude that $\sigma(A) =
  \{-3,-1,0\}$.  Hence, the spectral bound $\spb(A)$ equals $0$ and is a
  dominant spectral value of $A$; clearly, it is also a first order pole
  of the resolvent $\Res(\phdot,A)$. Note that $\one$ is an eigenvector of
  $A$ for the eigenvalue $0$.
  	
  Now, let $\alpha > 0$. We have $\alpha K = \alpha \delta_{-1} \otimes
  \one$ and it follows from
  Proposition~\ref{prop:rank-1-pert-special-case}(a) that any complex
  number $\lambda$ with $\repart \lambda > 0$ is a spectral value of
  $A+\alpha K$ if and only if $\lambda = \langle \alpha \delta_{-1},
  \one \rangle = \alpha$. Hence, the spectral bound of $A+\alpha K$
  equals $\alpha$ and is a dominant spectral value of $A+\alpha K$. The
  formula for $\Res(\phdot,A+\alpha K)$ in
  Proposition~\ref{prop:rank-1-pert-special-case}(a) immediately shows
  that the spectral value $\alpha$ is a first order pole of the
  resolvent.
  
  (b) This was shown in \cite[Example~5.7]{Daners2016}.
  
  (c) Fix $\alpha > 0$. We argue similarly as in
  \cite[Example~5.7]{Daners2016}: for every $\varepsilon \in (0,1)$ we
  can find a function $0 \leq f_\varepsilon \in E$ such that
  $f_\varepsilon(1) = \|f_\varepsilon\|_\infty = 1$, $\langle \varphi,
  f_\varepsilon \rangle = \varepsilon$ and $f_\varepsilon(-1) = 0$.  A
  short computation (or compare with \cite[formula~(5.3) in
  Example~5.7]{Daners2016}) shows that the resolvent of $A$ is given by
  \begin{displaymath}
    \Res(\lambda,A)
    = \frac{1}{\lambda} \id_{\langle \one \rangle} \oplus \frac{1}{(\lambda+2)^2-1}
    \bigl((\lambda+2)\id_F -S\bigr)
  \end{displaymath}
  for every $\lambda \in \rho(A)$. Using this an elementary calculation
  yields
  \begin{displaymath}
    \bigl(\Res(\lambda,A)f_\varepsilon\bigr)(-1)
    = \frac{\varepsilon}{2} \bigl( \frac{1}{\lambda}
    - \frac{1}{\lambda+3} \bigr) - \frac{1}{(\lambda+2)^2- 1}
  \end{displaymath}
  for every $\lambda \in \rho(A)$. Hence, for every $\lambda > 0$ we can
  find an $\varepsilon \in (0,1)$ such that
  $\bigl(\Res(\lambda,A)f_\varepsilon\bigr)(-1) < 0$. Now we can show
  that $\Res(\phdot,A+\alpha K)$ is not individually eventually positive
  at $\spb(A+\alpha K) = \alpha$: According to
  formula~\eqref{eq:resolvent-rank-one-pert-special-case} we have
  \begin{displaymath}
    \Res(\lambda,A+\alpha K)f_\varepsilon
    = \Res(\lambda,A)f_\varepsilon 
    + \frac{\alpha}{\lambda-\alpha}\bigl(\Res(\lambda,A)f_\varepsilon\bigr)(-1)\one
  \end{displaymath}
  and thus
  \begin{displaymath}
    \bigl( \Res(\lambda,A+\alpha K)f_\varepsilon \bigr)(-1) 
    = (1+\frac{\alpha}{\lambda-\alpha})\bigl(\Res(\lambda,A)f_\varepsilon \bigr)(-1)
  \end{displaymath}
  for all $\lambda \in \rho(A+\alpha K)$. Hence, if $\lambda > \alpha$
  is given, then we only have to choose $\varepsilon > 0$ such that
  $\bigl(\Res(\lambda,A)f_\varepsilon\bigr)(-1) < 0$ to obtain
  $\Res(\lambda,A+\alpha K)f_\varepsilon \not\geq 0$.
  
  It only remains to show that the semigroup $(e^{t(A+\alpha K)})_{t \ge
    0}$ is not individually eventually positive. To this end, we choose
  $\varepsilon > 0$ such that $\bigl(\Res(\alpha,A)f_\varepsilon\bigr)(-1)
  < 0$. It follows from
  formula~\eqref{eq:semigroup-rank-one-pert-special-case} that we have
  \begin{displaymath}
    e^{-t\alpha} e^{t(A+\alpha K)}f_\varepsilon
    = e^{-t\alpha}e^{tA}f_\varepsilon 
    + \alpha \Big[ \bigl(\Res(\alpha,A)f_\varepsilon\bigr)(-1) 
    - \bigl(e^{-t\alpha}e^{tA}\Res(\alpha,A)f_\varepsilon\bigr)(-1) \Big]\one 
  \end{displaymath}
  for every $t \geq 0$. Since the spectral bound of $A-\alpha$ equals
  $-\alpha$ and the operator $A-\alpha$ is bounded, we have
  $e^{-t\alpha}e^{tA} \to 0$ as $t \to \infty$ with respect to the
  operator norm. Hence, $e^{-t\alpha}e^{-t\alpha} e^{t(A+\alpha
    K)}f_\varepsilon$ converges to
  $\alpha \bigl(\Res(\alpha,A)f_\varepsilon\bigr)(-1) \one < 0$ with respect to
  the $\|\cdot\|_\infty$-norm as $t \to \infty$. In particular,
  $e^{t(A+\alpha K)}f_\varepsilon$ is not positive (in fact, it even
  fulfils $-e^{t(A+\alpha K)}f_\varepsilon \gg_{\one} 0$) 
  for all sufficiently large $t$. \qed
\end{example}

The above example indicates that if we want to prove any perturbation
results for eventually positive resolvents or semigroups, then we should
assume a version of \emph{uniform} eventual positivity. This is our
leitmotif for the rest of the paper.

\section{Perturbation theorems for resolvents}
\label{section:resolvents}

In this section we consider resolvents which are, at a spectral value
$\lambda_0$, uniformly eventually strongly positive with respect to a
quasi-interior point $u$. In the first subsection we show that this
property is stable with respect to sufficiently small perturbations
which are either positive or real multiplication operators. In the
second subsection we consider uniform eventual strong positivity at the
spectral bound and prove a \emph{quantitative} perturbation result for
this property.

A concrete class of operators for which eventual positivity of resolvents
has been studied for quite some time---though usually not under this name---is
constituted by fourth order differential operators (see~\cite{Sweers2016a} for an 
overview; compare also \cite[Proposition~6.5]{Daners2016a}). For such operators,
various perturbations results have been proved by quite concrete 
methods and estimates; see for instance~\cite{GrSw96}. Here, we rather focus on 
abstract functional analytical tools and prove results for abstract operators.

\subsection{A qualitative result}
The main result of this subsection is the following qualitative
perturbation result on eventually strongly positive resolvents at
arbitrary real eigenvalues which are poles of the resolvent.  Note that
we do not make any kind of compactness assumption in this theorem.
\begin{theorem}
  \label{thm:resolvent-small-perturbation}
  Let $E$ be a complex Banach lattice and let $A$ be a closed, densely
  defined real operator on $E$. Assume that $\lambda_0\in \sigma(A)\cap
  \bbR$ is a pole of $\Res(\phdot,A)$ and suppose that $\Res(\phdot,A)$ is
  uniformly eventually strongly positive with respect to $u$ at $\lambda_0$, where
  $u$ is a quasi-interior point of $E_+$.
  
  For all sufficiently small $r>0$ there exists $\varepsilon>0$ such
  that the following properties hold for every positive operator $B \in
  \calL(E)$ of norm $\|B\| < \varepsilon$:
  \begin{enumerate}[\upshape (a)]
  \item The operator $A+B$ has a unique spectral value $\lambda_B\in
    B(\lambda_0,r)$.
  \item The spectral value $\lambda_B$ is a real number, a pole of the
    resolvent $\Res(\phdot,A+B)$ and an algebraically simple eigenvalue
    of $A+B$.
  \item The resolvent $\Res(\phdot,A+B)$ is uniformly eventually strongly
    positive with respect to $u$ at $\lambda_B$.
  \end{enumerate}
\end{theorem}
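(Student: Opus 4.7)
My plan is to proceed in three stages: first, extract the concrete spectral data at $\lambda_0$ from the positivity hypothesis; second, transfer this data to $A+B$ by standard analytic perturbation theory; third, verify the perturbed data still yield uniform eventual strong positivity of $\Res(\phdot,A+B)$ at $\lambda_B$.

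The uniform eventual strong positivity of $\Res(\phdot,A)$ at the pole $\lambda_0$, together with \cite[Theorem~5.1]{DanersNOTE} and \cite[Corollary~3.3]{Daners2016a} (as used in the proof of Theorem~\ref{thm:negative-result-for-large-perturbations}), forces $\lambda_0$ to be a first-order pole and an algebraically simple eigenvalue of $A$ whose Riesz projection has the rank-one form $P = v \otimes \varphi$, where $v \gg_u 0$ is a real eigenvector of $A$ for $\lambda_0$ and $\varphi$ is a real, strictly positive eigenfunctional of $A'$ for $\lambda_0$, normalised by $\langle \varphi, v\rangle = 1$.

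For parts (a) and (b), I pick $r_0 > 0$ with $\overline{B(\lambda_0, r_0)} \cap \spec(A) = \{\lambda_0\}$, fix $r \in (0, r_0]$, and apply a standard Neumann-series argument on the contour $\partial B(\lambda_0, r)$ (cf.\ \cite[Section~IV.3]{Kato1976}). This supplies $\varepsilon > 0$ such that whenever $\|B\| < \varepsilon$, the contour lies in $\resSet(A+B)$ and the perturbed Riesz projection
\[
  P_B := \frac{1}{2\pi i}\oint_{\partial B(\lambda_0, r)} \Res(\lambda, A+B)\,d\lambda
\]
is well-defined with $\|P_B - P\| < 1$, whence $\dim\im P_B = 1$. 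Thus $\spec(A+B) \cap B(\lambda_0, r) = \{\lambda_B\}$ with $\lambda_B$ algebraically simple and a first-order pole of $\Res(\phdot,A+B)$. As $A+B$ is real ($B$ being positive), $\spec(A+B)$ is conjugation-symmetric, and uniqueness of $\lambda_B$ in the symmetric disc forces $\lambda_B\in\bbR$.

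For (c), factor $P_B = v_B \otimes \varphi_B$ with $\langle\varphi_B, v_B\rangle = 1$, choosing real factors with $v_B \to v$ in $E$ and $\varphi_B\to\varphi$ in $E'$. The same characterisation invoked above reduces (c) to showing $v_B \gg_u 0$, strict positivity of $\varphi_B$ on $E_+\setminus\{0\}$, and a standard boundedness of the analytic part of the Laurent expansion of $\Res(\phdot, A+B)$ near $\lambda_B$. \emph{The principal obstacle is $v_B \gg_u 0$}: norm convergence $v_B \to v$ in $E$ is strictly weaker than convergence in the gauge norm $\|\phdot\|_u$, which is what $v_B \geq \delta u$ really requires. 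Once $v_B \geq 0$ is in hand, the desired strict inequality follows from the identity
\[
  v_B = (\lambda - \lambda_B)\,\Res(\lambda, A)v_B + \Res(\lambda, A)\,B v_B,
\]
valid for any $\lambda \in \resSet(A)\cap\resSet(A+B)$ (derived from the eigenvector equation and the second resolvent identity), applied at a real $\lambda \in \bigl(\max(\lambda_B,\lambda_0),\lambda_1\bigr]$: the first term is $\gg_u 0$ by uniform eventual strong positivity of $\Res(\phdot,A)$ and the second is $\geq 0$ because $B \geq 0$ and $\Res(\lambda, A)$ maps $E_+$ into itself on that range. The crux is therefore to establish $v_B \geq 0$ for small $B$, which I expect to be the most delicate point; a natural route exploits the Neumann expansion $\Res(\lambda, A+B) = \sum_{n\geq 0}\Res(\lambda, A)(B\Res(\lambda, A))^n$, which for $\|B\|$ small and $\lambda \in (\lambda_0,\lambda_1]$ inside its convergence range yields $\Res(\lambda, A+B)f \geq \Res(\lambda, A)f \geq 0$ for every $f \geq 0$; pushing this inequality through the Laurent residue of $\Res(\phdot,A+B)$ at $\lambda_B$, together with a sharp analysis of how $\lambda_B$ approaches $\lambda_0$ as $\|B\|\to 0$ relative to the Neumann-convergence threshold, should give $P_B v \geq 0$ and hence $v_B \geq 0$. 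Strict positivity of $\varphi_B$ follows by the analogous dual argument on $E'$, and the regular-part condition follows from its analyticity combined with the uniform bounds obtained above.
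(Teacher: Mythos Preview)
Your treatment of (a) and (b) via contour integrals and rank stability of the Riesz projection is essentially the paper's Lemma~\ref{lem:eigenvalue-perturbation}, so that part is fine.

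For (c), however, you take a substantially longer route than the paper, and the step you yourself flag as ``the most delicate'' is left as a sketch. You aim to verify the characterisation criteria directly by showing $v_B \gg_u 0$ and $\varphi_B$ strictly positive, and you propose to obtain $v_B \ge 0$ by pushing the Neumann-series positivity $\Res(\lambda,A+B)\ge 0$ through the Laurent residue at $\lambda_B$. The difficulty you anticipate is real: the Neumann expansion $\Res(\lambda,A+B)=\sum_{k\ge 0}\Res(\lambda,A)\bigl(B\Res(\lambda,A)\bigr)^k$ converges only where $\|B\Res(\lambda,A)\|<1$, and since $\|\Res(\lambda,A)\|\to\infty$ as $\lambda\downarrow\lambda_0$ while $\lambda_B\to\lambda_0$, this range typically does not reach $\lambda_B$. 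Your proposed ``sharp analysis of how $\lambda_B$ approaches $\lambda_0$ relative to the Neumann-convergence threshold'' is not carried out, and without it the residue step is unjustified.

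The paper bypasses all of this with a single observation, Proposition~\ref{prop:uniform-eventual-positivity-by-neumann-series}: if $\Res(\lambda_1,A+B)\ge 0$ and some power is $\gg_u 0$, then $\Res(\lambda,A+B)\gg_u 0$ for \emph{every} $\lambda\in(\lambda_B,\lambda_1)$, automatically. So one simply chooses $r$ small enough that $\Res(\lambda_0+r,A)\gg_u 0$, notes that the Neumann series at the \emph{fixed} point $\lambda_0+r$ gives $\Res(\lambda_0+r,A+B)\ge\Res(\lambda_0+r,A)\gg_u 0$, and then Proposition~\ref{prop:uniform-eventual-positivity-by-neumann-series}(b) propagates this down to the whole interval $(\lambda_B,\lambda_0+r]$ in one stroke. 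This is precisely the definition of uniform eventual strong positivity at $\lambda_B$, so (c) is immediate---no need to analyse $v_B$, $\varphi_B$, or the regular part of the Laurent expansion at all. Incidentally, the same proposition (part (a)) is also exactly what would close the gap in your argument: it is what lets you extend $\Res(\lambda,A+B)\ge 0$ from one point all the way down to $\lambda_B$ and hence take the residue; but once you have it, the detour through the characterisation theorem becomes redundant.
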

One can prove a similar result for perturbations $B$ which are not
positive, but real multiplication operators; see
Corollary~\ref{cor:resolvent-small-perturbation} below.

In order to prove Theorem~\ref{thm:resolvent-small-perturbation} we need
two auxiliary results. The first one is a version of
\cite[Proposition~4.2]{Daners2016} on arbitrary Banach lattices. The
fact that such a result holds was already remarked in the discussion
after \cite[Definition~4.2]{Daners2016a}; however, the result was not
stated explicitly there.
\begin{proposition}
  \label{prop:uniform-eventual-positivity-by-neumann-series}
  Let $A\colon E \supseteq D(A) \to E$ be a real operator on a complex
  Banach lattice $A$ and let $\lambda_0$ be either $-\infty$ or a spectral 
  value of $A$ in $\bbR$. Consider a real number $\lambda_1 >
  \lambda_0$ such that $(\lambda_0, \lambda_1] \subseteq \resSet(A)$ and
  assume that $\Res(\lambda_1,A) \geq 0$. Then the following assertions
  hold.
  \begin{enumerate}[\upshape (a)]
  \item We have $\Res(\lambda,A) \geq 0$ for all $\lambda \in (\lambda_0,
    \lambda_1]$.
  \item If $u$ is a quasi-interior point of $E_+$ and if
    $\Res(\lambda_1,A)^n \gg_u 0$ for some $n \in \bbN$, then
    $\Res(\lambda,A) \gg_u 0$ for all $\lambda \in (\lambda_0,
    \lambda_1)$.
  \end{enumerate}
\end{proposition}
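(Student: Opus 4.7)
The plan is to combine a Neumann series expansion with a connectedness-type argument for part (a), and then to deduce part (b) from (a) via the resolvent identity. The main tool throughout is the Neumann expansion
\[
\Res(\lambda, A) = \sum_{k=0}^{\infty} (\mu - \lambda)^k \Res(\mu, A)^{k+1},
\]
which converges in operator norm whenever $|\mu - \lambda| < \dist(\mu, \spec(A))$, and which has nonnegative summands as soon as $\lambda < \mu$ and $\Res(\mu, A) \geq 0$ (since the positive cone of $\calL(E)$ is closed under composition).

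For part (a), I would define $M := \{\lambda \in (\lambda_0, \lambda_1] : \Res(\lambda, A) \geq 0\}$ and argue $M = (\lambda_0, \lambda_1]$ via a closed-plus-open-to-the-left scheme. First, the Neumann expansion at $\lambda_1$ gives $M \supseteq (\lambda_1 - \delta, \lambda_1]$ for some $\delta > 0$. Second, since $\Res(\phdot, A)$ is norm continuous on $\resSet(A)$ and the positive cone is norm closed, $M$ is relatively closed in $(\lambda_0, \lambda_1]$. Third, if $\mu \in M$ with $\mu > \lambda_0$, then expanding at $\mu$ (with strictly positive radius $\dist(\mu, \spec(A))$) shows $\Res(\lambda, A) \geq 0$ for $\lambda$ in a left neighborhood of $\mu$, so $M$ is open to the left inside $(\lambda_0, \lambda_1]$. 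These three properties together force $\inf M = \lambda_0$, proving (a).

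For part (b), I would first exhibit a single $\mu \in (\lambda_0, \lambda_1)$ with $\Res(\mu, A) \gg_u 0$ and then propagate this downward by the resolvent identity. For the first step, take any $\mu \in (\lambda_1 - \delta, \lambda_1)$ and any $0 < f \in E$: the $k = n-1$ summand of the Neumann expansion at $\lambda_1$ equals $(\lambda_1 - \mu)^{n-1} \Res(\lambda_1, A)^n f$, which is $\geq \varepsilon u$ for some $\varepsilon > 0$ because $\Res(\lambda_1, A)^n \gg_u 0$; all other summands are positive by part (a). Hence $\Res(\mu, A) f \gg_u 0$, so $\Res(\mu, A) \gg_u 0$. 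For the second step, the resolvent identity
\[
\Res(\lambda, A) = \Res(\mu, A) + (\mu - \lambda) \Res(\lambda, A) \Res(\mu, A)
\]
combined with (a) shows that $\Res(\lambda, A) f \geq \Res(\mu, A) f \gg_u 0$ for every $\lambda \in (\lambda_0, \mu]$. Letting $\mu \uparrow \lambda_1$ exhausts $(\lambda_0, \lambda_1)$.

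The main delicate point is that the Neumann series centered at an interior $\mu$ has radius of convergence $\dist(\mu, \spec(A))$ measured in $\bbC$, which may be much smaller than $\mu - \lambda_0$ if $A$ has non-real spectrum close to $\mu$. One therefore cannot reach all of $(\lambda_0, \lambda_1]$ by a single Neumann expansion from $\lambda_1$, which is exactly why part (a) demands the iterative closed-plus-open scheme. Part (b), by contrast, sidesteps the radius-of-convergence issue entirely because the resolvent identity yields an \emph{exact} comparison $\Res(\lambda, A) \geq \Res(\mu, A)$ valid for any pair $\lambda \leq \mu$ in $(\lambda_0, \lambda_1]$, once (a) is in hand.
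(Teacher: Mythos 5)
Your proof is correct and follows essentially the same strategy that the paper defers to: the paper's proof is simply the citation \cite[Proposition~4.2]{Daners2016}, which proceeds via exactly this Neumann-series expansion together with a propagation argument, and your execution of both parts matches. One small wrinkle in the wrap-up of part (a): concluding $\inf M = \lambda_0$ does not by itself give $M = (\lambda_0, \lambda_1]$ unless you also know $M$ is an interval, which is part of what is being proved; the cleaner phrasing is to let $\lambda_\ast$ denote the infimum of those $\lambda \in (\lambda_0,\lambda_1]$ with $[\lambda, \lambda_1] \subseteq M$ (a set that is manifestly an upper interval), use relative closedness to get $\lambda_\ast \in M$ and hence $[\lambda_\ast,\lambda_1]\subseteq M$, and then apply your openness-to-the-left step at $\lambda_\ast$ to force $\lambda_\ast = \lambda_0$. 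Part (b) is clean as written, and your observation that the resolvent identity $\Res(\lambda,A) = \Res(\mu,A) + (\mu-\lambda)\Res(\lambda,A)\Res(\mu,A)$ bypasses the radius-of-convergence obstruction entirely once (a) is available is exactly the right point to emphasise.
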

\begin{proof}
  The proof is exactly the same as the proof of
  \cite[Proposition~4.2]{Daners2016}.
\end{proof}
The second ingredient for the proof of
Theorem~\ref{thm:resolvent-small-perturbation} is 
Lemma~\ref{lem:eigenvalue-perturbation} below that
guarantees that a pole of the resolvent which is, in addition, an
algebraically simple real eigenvalue preserves these properties through
a small perturbation by a real operator. This lemma is a typical 
result from standard perturbation theory (compare for instance
\cite[Section~IV.3]{Kato1976}). Though, in order to have it 
available in exactly the version we need, we include a proof.
In the preliminaries we
introduced the concept of a real operator only on complex Banach
lattices and to avoid the necessity of even more terminology, we shall
state the lemma only on those spaces; compare however
Remark~\ref{rem:complexifications} below.

\begin{lemma}
  \label{lem:eigenvalue-perturbation}
  Let $E$ be a complex Banach lattice and let $A$ be a closed operator
  on $E$. Assume that $\lambda_0\in\spec(A)$ is a pole of the resolvent
  $\Res(\phdot,A)$ and an algebraically simple eigenvalue of $A$ with
  spectral projection $P_0$.  Let $r>0$ be such that
  $\overline{B(\lambda_0,r)}\cap \sigma(A)=\{\lambda_0\}$ and set
  $\varepsilon =\min_{|\lambda-\lambda_0|=r}\|\Res(\lambda,A)\|^{-1}$.  For
  every $B \in \calL(E)$ with $\|B\| < \varepsilon$ the following
  assertions are fulfilled:
  \begin{enumerate}[\upshape (a)]
  \item $A+B$ has a unique spectral value $\lambda_B\in B(\lambda_0,r)$
    and $\lambda_B$ is a pole of the resolvent $\Res(\phdot,A+B)$
    and an algebraically simple eigenvalue of $A+B$.
  \item Denote by $P_B$ the spectral projections associated with
    $\lambda_B$. Then $\lambda_B\to\lambda_0$ and $P_B\to P_0$ with
    respect to the operator norm as $\|B\| \to 0$.
  \item If $\lambda_0 \in \bbR$ and the operators $A$ and $B$ are real,
    then $\lambda_B\in\bbR$.
  \end{enumerate}
\end{lemma}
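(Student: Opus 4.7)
The plan is to invoke the classical contour-integral technique of perturbation theory (cf.\ \cite[Section~IV.3]{Kato1976}); the hypothesis on $\varepsilon$ is tailored precisely so that the relevant Neumann series converges on the circle $\Gamma := \{\lambda \in \bbC : |\lambda-\lambda_0|=r\}$.

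First, I would show $\Gamma \subseteq \resSet(A+B)$ via the factorisation $\lambda I - (A+B) = (I - B\,\Res(\lambda,A))(\lambda I - A)$: the choice of $\varepsilon$ gives $\|B\,\Res(\lambda,A)\| < 1$ on $\Gamma$, so
\[
\Res(\lambda,A+B) = \Res(\lambda,A)\sum_{k=0}^\infty \bigl(B\,\Res(\lambda,A)\bigr)^k
\]
converges uniformly on $\Gamma$ and tends to $\Res(\lambda,A)$ in operator norm uniformly on $\Gamma$ as $\|B\|\to 0$.

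Second, define the Riesz projection $P_B := \frac{1}{2\pi i}\oint_\Gamma \Res(\lambda, A+B)\,d\lambda$, which is the spectral projection of $A+B$ associated with $\spec(A+B)\cap B(\lambda_0,r)$. The uniform convergence above yields $P_B \to P_0$ in operator norm. Because $\im P_0$ is one-dimensional and projections operator-norm-close to a finite-rank projection share its rank (see e.g.\ \cite[Lemma~I.4.10]{Kato1976}), $P_B$ also has one-dimensional range. Since $\im P_B$ is invariant under $A+B$, the restricted operator $(A+B)|_{\im P_B}$ is multiplication by a single scalar $\lambda_B$. By the functional calculus, $\spec((A+B)|_{\im P_B}) = \spec(A+B)\cap B(\lambda_0,r)$, so this set equals $\{\lambda_B\}$; then $\lambda_B$ is an algebraically simple eigenvalue of $A+B$ with spectral projection $P_B$, and by the remark recalled in the preliminaries it is a first-order pole of $\Res(\phdot,A+B)$. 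This proves (a) and the convergence $P_B \to P_0$ in (b).

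For the remaining claim $\lambda_B \to \lambda_0$ in (b), I would re-apply part (a) with each smaller radius $r' \in (0,r)$: for $\|B\|$ sufficiently small the unique spectral value of $A+B$ in $B(\lambda_0,r)$ must in fact lie in $B(\lambda_0,r')$, forcing $\lambda_B \to \lambda_0$. For (c), if $A$ and $B$ are real and $\lambda_0\in\bbR$, then $\spec(A+B)$ is invariant under complex conjugation and the disc $B(\lambda_0,r)$ is symmetric about $\bbR$; the uniqueness of the spectral value inside the disc forces $\overline{\lambda_B} = \lambda_B$. The only mildly delicate ingredient is the rank-constancy of close projections; everything else is a routine consequence of the Neumann series and the Riesz functional calculus.
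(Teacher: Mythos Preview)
Your approach is the same as the paper's---Neumann series on the circle, Riesz projection, rank stability via \cite[Lemma~I.4.10]{Kato1976}---but there is one genuine gap. You deduce that $P_B$ has rank one from the convergence $P_B\to P_0$ as $\|B\|\to 0$; this only guarantees $\|P_B-P_0\|<1$ (and hence $\dim\im P_B=1$) for $\|B\|$ below some threshold $\delta$ that may be strictly smaller than the $\varepsilon$ in the statement. The lemma, however, asserts its conclusions for \emph{every} $B$ with $\|B\|<\varepsilon$, so as written you have proved~(a) only on a possibly smaller ball.

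The paper closes this gap by showing that the map $B\mapsto P_B$ is continuous on the \emph{entire} open ball $\{B:\|B\|<\varepsilon\}$, via a second Neumann series expansion centred at an arbitrary such $B$ rather than at $0$. Since this ball is connected and $\dim\im P_B$ is locally constant by \cite[Lemma~I.4.10]{Kato1976}, the rank equals $\dim\im P_0=1$ throughout. Once you insert this continuity step, the rest of your argument (the identification of $\lambda_B$, the shrinking-radius argument for $\lambda_B\to\lambda_0$, and the conjugation argument for~(c)) matches the paper's proof.
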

\begin{proof}
  Let $C_r$ be the
  positively oriented circle of radius $r>0$ centred at $\lambda_0$ as
  given in the statement of the lemma and let $B \in \calL(E)$ with $\|B\| 
  < \varepsilon$. For all $\lambda \in C_r$ we have
  $\|\Res(\lambda,A)B\| \leq \|\Res(\lambda,A)\| \|B\| \leq \|B\|/\varepsilon
  < 1$.  Since $\lambda I-(A+B)=\bigl(I-B\Res(\lambda,A)\bigr)(\lambda I-A)$,
  a Neumann series expansion yields that $\lambda I - (A+B)$ is
  invertible and that
  \begin{equation}
    \label{eq:perturbed-resolvent}
    \Res(\lambda,A+B)
    =\Res(\lambda,A)\bigl[I-B\Res(\lambda,A)\bigr]^{-1}
    =\Res(\lambda,A)\sum_{k=0}^\infty\bigl[B\Res(\lambda,A)\bigr]^k
  \end{equation}
  for all $\lambda \in C_r$. In particular we can define the projection
  \begin{equation*}
    P_B:=\frac{1}{2\pi i}\int_{C_r}\Res(\lambda,A+B)\,d\lambda.
  \end{equation*}
  We first show that $P_B$ depends continuously on $B$. Indeed, let 
  $\alpha:= \min_{|\lambda-\lambda_0|=r} \|\Res(\lambda,A+B)\|^{-1}$, 
  i.e.\ we have $\alpha \cdot \|\Res(\lambda,A+B)\| \le 1$ for all $\lambda \in C_r$.
  Let $\delta \in (0,1)$. Another Neumann series argument shows that
  whenever an operator $\tilde B \in \calL(E)$, say of norm $\|\tilde B\| < 
  \varepsilon$, is closer to $B$ than $\alpha\delta$, then
  \begin{displaymath}
  		\|\Res(\lambda, A+\tilde B) - \Res(\lambda,A+B)\| \le 
  		\frac{\delta}{\alpha(1-\delta)}
  \end{displaymath}
  for all $\lambda \in C_r$, and thus $\|P_{\tilde B} - P_{B}\| \le 
  \delta \frac{r}{\alpha(1-\delta)}$. This proves that $P_{\tilde B}
  \to P_B$ for $\tilde B \to B$.
  
  Now it follows from \cite[Lemma~I.4.10]{Kato1976} (the proof there does not rely on
  $E$ being finite dimensional) and our assumption that $\dim(\im
  P_B)=\dim(\im P_0)=1$ whenever $\|B\|<\varepsilon$. In particular,
  $A+B$ has only one spectral value $\lambda_B$ in the disk
  $B(\lambda_0,r)$; since the corresponding spectral projection $P_B$
  has rank one, it follows that $\lambda_B$ is a pole of the resolvent
  $\Res(\phdot,A+B)$ \cite[Section~III.6.5]{Kato1976} and an algebraically
  simple eigenvalue. We thus proved (a) and the second part of
  (b). Because $r>0$ can be chosen arbitrarily small, we conclude that 
  $\lambda_B\to 0$ as $\|B\|\to 0$, which proves the first part of (b).

  To prove (c), suppose that $A,B$ are real and that $\lambda_0 \in
  \bbR$.  If $\lambda_B\not\in\bbR$, then $\overline{\lambda}_B$ is a
  second spectral value of $A+B$ in the disk $B(\lambda_0,r)$, which
  contradicts (a). Thus, $\lambda_B \in \bbR$.
\end{proof}

\begin{remark}
  \label{rem:complexifications}
  The proof of Lemma~\ref{lem:eigenvalue-perturbation} actually shows a
  bit more. Assertions~(a) and~(b) of the lemma remain true if $E$ is
  only assumed to be a complex Banach space. Assertion~(c) does not make
  sense if $E$ is only a complex Banach space since the notion of a
  \emph{real operator} is not defined on such spaces. If, however, $E$ is a
  so-called \emph{complexification} of a real Banach space $E_\bbR$,
  then the notion of a real operator makes sense; in this situation,
  assertion~(c) of Lemma~\ref{lem:eigenvalue-perturbation} remains true.
  
  For a detailed treatement of complexifications we refer the reader for
  example to \cite{Munoz1999}. Here we only point out that every complex
  Banach lattice is a certain complexification of a real Banach lattice
  and thus of a real Banach space (see
  \cite[Section~II.11]{Schaefer1974} or
  \cite[Section~2.2]{Meyer-Nieberg1991}).
\end{remark}

We are now ready to prove
Theorem~\ref{thm:resolvent-small-perturbation}.

\begin{proof}[Proof of Theorem~\ref{thm:resolvent-small-perturbation}]
  It follows from \cite[Theorem~4.2 and Proposition~4.1]{DanersNOTE}
  that $\lambda_0$ is an algebraically simple eigenvalue of $A$. By
  assumption we can choose $r>0$ such that $\overline{B(\lambda_0,r)}
  \cap \sigma(A) = \{\lambda_0\}$ and that $\Res(\lambda_0+r,A)\gg_u 0$.
  Choose $\varepsilon > 0$ as in Lemma~\ref{lem:eigenvalue-perturbation}
  and let $B \in \calL(E)$ be positive with norm $\|B\| < \varepsilon$.
  Then by that lemma there
  exists a unique $\lambda_B\in B(\lambda_0,r)\cap\sigma(A+B)$. Moreover
  $\lambda_B$ is a pole of the resolvent $\Res(\phdot,A+B)$ and an
  algebraically simple eigenvalue of $A+B$ and we have
  $\lambda_B\in(\lambda_0-r,\lambda_0+r)$. This proves~(a) and~(b).
  
  Since $B\Res(\lambda_0+r,A)\geq 0$, identity
  \eqref{eq:perturbed-resolvent} with $\lambda$ replaced with
  $r+\lambda_0$ implies that $\Res(\lambda_0+r,A+B)\geq
  \Res(\lambda_0+r,A)\gg_u 0$.
  Proposition~\ref{prop:uniform-eventual-positivity-by-neumann-series}(b)
  now shows that $\Res(\lambda,A+B) \gg_u 0$ for all $\lambda \in
  (\lambda_B,\lambda_0 + r]$.
\end{proof}
Let us now consider the case where the perturbation $B$ is not positive,
but a real multiplication operator.
\begin{corollary}
  \label{cor:resolvent-small-perturbation}
  Let $E$ be a complex Banach lattice and let $A$ be a closed, densely
  defined real operator on $E$. Assume that $\lambda_0\in \sigma(A)\cap
  \bbR$ is a pole of $\Res(\phdot,A)$ and suppose that $\Res(\phdot,A)$ is
  uniformly eventually strongly positive with respect to $u$ at $\lambda_0$,
  where $u$ is a quasi-interior point of $E_+$.
  
  Then, for all sufficiently small $r>0$ there exists $\varepsilon>0$
  such that the assertions~{\upshape (a)--(c)} from
  Theorem~\ref{thm:resolvent-small-perturbation} hold for every real
  multiplication operator $B \in \calL(E)$ of norm
  $\|B\| < \varepsilon$.
\end{corollary}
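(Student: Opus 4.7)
The plan is to reduce to Theorem~\ref{thm:resolvent-small-perturbation} by absorbing the negative part of $B$ into a scalar shift of $A$. Since $B$ is a real multiplication operator, pick $c \ge 0$ minimal with $-c\id_E \le B \le c\id_E$; as the center norm coincides with the operator norm on multiplication operators we have $c = \|B\|$. Decompose $A + B = \tilde A + \tilde B$ with $\tilde A := A - c\id_E$ and $\tilde B := B + c\id_E$. Then $\tilde B \ge 0$, $\|\tilde B\| \le 2\|B\|$, and $\tilde A$ is closed, densely defined and real, with $\spec(\tilde A) = \spec(A) - c$ and $\Res(\lambda, \tilde A) = \Res(\lambda + c, A)$. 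Hence $\tilde\lambda_0 := \lambda_0 - c$ is a pole of $\Res(\phdot, \tilde A)$ and $\Res(\phdot, \tilde A)$ is uniformly eventually strongly positive with respect to $u$ at $\tilde\lambda_0$.

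For the spectral conclusions (a) and (b), I would apply Lemma~\ref{lem:eigenvalue-perturbation} \emph{directly} to the pair $(A, B)$ rather than to $(\tilde A, \tilde B)$: this is crucial because only the former gives uniqueness of the perturbed spectral value in the ball $B(\lambda_0, r)$ centered at the original $\lambda_0$. Since multiplication operators are automatically real and bounded, Lemma~\ref{lem:eigenvalue-perturbation} with $r$ chosen so that $\overline{B(\lambda_0, r)} \cap \spec(A) = \{\lambda_0\}$ and $\varepsilon < \min_{|\lambda - \lambda_0| = r}\|\Res(\lambda, A)\|^{-1}$ produces a unique real $\lambda_B \in B(\lambda_0, r)$ which is an algebraically simple eigenvalue of $A + B$ and a pole of $\Res(\phdot, A+B)$.

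For assertion (c) I would mimic the final step of the proof of Theorem~\ref{thm:resolvent-small-perturbation}, but using the decomposition $A + B = \tilde A + \tilde B$. Shrinking $\varepsilon$ further so that (i)~the shift $c = \|B\|$ remains small enough that $\lambda_0 + r + c$ still lies in the range where $\Res(\phdot, A) \gg_u 0$, and (ii)~the Neumann series~\eqref{eq:perturbed-resolvent}, applied with $\tilde A$ in place of $A$ and $\tilde B$ in place of $B$, converges at $\lambda_0 + r$, the positivity of $\tilde B$ together with $\Res(\lambda_0+r, \tilde A) = \Res(\lambda_0+r+c, A) \ge 0$ yields termwise
\[
\Res(\lambda_0+r, A+B) = \Res(\lambda_0+r, \tilde A + \tilde B) \ge \Res(\lambda_0+r, \tilde A) \gg_u 0.
\]
Proposition~\ref{prop:uniform-eventual-positivity-by-neumann-series}(b) applied to $A+B$ with $\lambda_1 := \lambda_0 + r$ then gives $\Res(\lambda, A+B) \gg_u 0$ on $(\lambda_B, \lambda_0 + r]$, i.e.\ uniform eventual strong positivity at $\lambda_B$.

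The main obstacle is purely one of bookkeeping: selecting a single $\varepsilon > 0$ that simultaneously realises the perturbation bound of Lemma~\ref{lem:eigenvalue-perturbation}, the Neumann series convergence at $\lambda_0 + r$ for $\tilde A + \tilde B$, and the constraint $c < r_0 - r$ (where $r_0 > r$ is fixed so that $\Res(\phdot, A) \gg_u 0$ on $(\lambda_0, \lambda_0 + r_0]$). Each of these is of the form ``$\|B\|$ below a constant depending only on $r$, $A$, $\lambda_0$ and $u$'', so the minimum of these constants furnishes the required $\varepsilon$.
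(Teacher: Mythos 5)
Your proof is correct and rests on the same key observation as the paper's --- namely that $\tilde B := B + \|B\|\id_E$ is a positive operator (via Lemma~\ref{lem:norm-of-multiplication-operator}) of norm at most $2\|B\|$ --- but it organizes the reduction differently. The paper applies Theorem~\ref{thm:resolvent-small-perturbation} wholesale to the positive perturbation $\tilde B$ of $A$, obtaining a unique spectral value in the small disk $B(\lambda_0,r/3)$ and the accompanying strong positivity for $A+\tilde B$, and then simply translates all conclusions by the scalar shift $-\|B\|\id_E$, exploiting that shifting an operator by a multiple of the identity translates spectrum, resolvent, and positivity properties in an evident way; the disk $B(\lambda_0,2r/3)$ then absorbs the motion of the spectral value under this shift. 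You instead split the argument: you apply Lemma~\ref{lem:eigenvalue-perturbation} directly to the (not-necessarily-positive) real perturbation $B$ to get assertions (a) and (b) without any shifting, and then redo the Neumann-series positivity step of Theorem~\ref{thm:resolvent-small-perturbation} for the decomposition $A+B = \tilde A + \tilde B$. Both routes are valid; the paper's has the advantage of treating Theorem~\ref{thm:resolvent-small-perturbation} as a black box, while yours avoids the small disk/big disk bookkeeping at the cost of re-opening the proof of the theorem. Two small remarks: (1)~your claim that $c = \|B\|$ (equality) is stronger than what the paper establishes --- Lemma~\ref{lem:norm-of-multiplication-operator} only gives $c \le \|B\|$, and the paper explicitly says it suspects but does not prove equality; fortunately your argument only uses the inequality. (2)~For the application of Lemma~\ref{lem:eigenvalue-perturbation} you implicitly need $\lambda_0$ to be an algebraically simple eigenvalue of $A$; as in the proof of Theorem~\ref{thm:resolvent-small-perturbation}, this should be noted as a consequence of the uniform eventual strong positivity hypothesis via \cite[Theorem~4.2 and Proposition~4.1]{DanersNOTE}.
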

The proof of this corollary relies on the following observation
concerning multiplication operators.
\begin{lemma}
  \label{lem:norm-of-multiplication-operator}
  Let $E$ be a complex Banach lattice and let $T \in \calL(E)$ be a real
  multiplication operator. Then we have
  \begin{displaymath}
    \|T\| \geq \min\{c \geq 0\colon -c\id \leq T \leq c\id\}.
  \end{displaymath}
\end{lemma}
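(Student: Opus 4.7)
The plan is to show that $\|T\|$ itself lies in the set $\{c \geq 0 : -c\id \leq T \leq c\id\}$, from which $c_0 \leq \|T\|$ (with $c_0$ denoting the minimum of that set) follows immediately. That $c_0$ is a true minimum follows from the closedness of the positive cone in $\calL(E)$: the set of admissible $c$ is a half-line $[c_0,\infty)$ containing its left endpoint.

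I argue by contradiction. Suppose $\|T\| < c_0$ and pick some $c \in (\|T\|, c_0)$. By the minimality of $c_0$, at least one of the operators $c\id \mp T$ fails to be positive. After replacing $T$ by $-T$ if necessary (which preserves both $\|T\|$ and $c_0$), I may assume $c\id - T \not\geq 0$, so there exists $f \in E_+$ with $g := (Tf - cf)^+ \neq 0$.

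The crucial step exploits that $T$, being a multiplication operator, lies in the center $\mathcal{Z}(E)$ and hence commutes with every band projection. Let $P$ be the band projection onto the band generated by $g$. Applying $P$ to the decomposition $Tf - cf = g - (Tf - cf)^-$, and using that $g$ and $(Tf - cf)^-$ are disjoint so that $P$ fixes $g$ and annihilates $(Tf - cf)^-$, I obtain $P(Tf - cf) = g$; commuting $T$ past $P$ rewrites this as $TPf = cPf + g$. Both summands on the right are in $E_+$, so by monotonicity of the norm $\|TPf\| \geq c\|Pf\|$; moreover $Pf \neq 0$, for otherwise $TPf = 0 \neq g$. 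Hence $\|T\| \geq \|TPf\|/\|Pf\| \geq c$, contradicting $c > \|T\|$; letting $c \nearrow c_0$ completes the argument.

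The main obstacle is the existence of the band projection $P$, which is guaranteed in a Dedekind complete Banach lattice but not in general. To handle the general case, I would pass to the Dedekind completion $E^\delta$ and extend $T$ to $T^\delta \in \calL(E^\delta)$; the order bound transfers cleanly so that $c_0(T^\delta) = c_0(T)$, while the preservation of the operator norm, $\|T^\delta\| = \|T\|$, is the technically delicate point and requires careful use of the compatibility between the norm and the lattice structure on $E^\delta$.
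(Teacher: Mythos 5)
Your approach is genuinely different from the paper's. The paper's proof is a one-liner: it restricts $T$ to the real part $E_\bbR$, notes $\|T|_{E_\bbR}\| \le \|T\|$, and invokes \cite[Theorem~3.1.11]{Meyer-Nieberg1991}, which asserts that on a \emph{real} Banach lattice the operator norm of a central operator is exactly $\min\{c \ge 0: -c\id \le T \le c\id\}$. You instead give a self-contained lower bound. Your band-projection argument is correct and rather elegant on a Dedekind $\sigma$-complete lattice: the principal band generated by $g = (Tf-cf)^+$ is a projection band, $T$ commutes with the band projection $P$ because central operators preserve bands and their disjoint complements, and the chain $TPf = cPf + g \ge cPf \ge 0$ together with norm monotonicity gives $\|T\| \ge c$.

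The gap is the reduction to the general case, and it is a real one: the step $\|T^\delta\| = \|T\|$ that you flag as ``technically delicate'' is precisely where the content of the lemma reappears. The natural norm on the Dedekind completion is $\|\hat x\|^\delta = \inf\{\|y\| : y\in E_+,\ |\hat x|\le y\}$, and to show $\|T^\delta\hat x\|^\delta \le \|T\|\,\|\hat x\|^\delta$ one is led to estimate $|T^\delta \hat x| \le |T^\delta|\,|\hat x| \le |T|\,z$ for dominating $z \in E_+$, and then to control $\|\,|T|z\,\|$ by $\|T\|\,\|z\|$. This requires the identities $|Tx| = |T|\,|x|$ for orthomorphisms and $\|\,|T|\,\| = \|T\|$, which are nontrivial facts about the center (of comparable depth to the cited Meyer--Nieberg theorem), so the completion detour does not actually yield a more elementary proof without further work. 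A cleaner patch that stays inside $E$ is to localize in a principal ideal: take $e := f + |Tf|$, represent $E_e \cong C(K)$ (Kakutani), where $T|_{E_e}$ is multiplication by some $h\in C(K)$ with $h(\omega_0)>c$ for some $\omega_0$; picking a nonzero $0\le\psi\le\one$ supported where $h>c$ and letting $x\in E_e$ correspond to $\psi$ gives $0 \le cx \le Tx$ in $E$, whence $\|T\|\ge c$ by monotonicity of the $E$-norm, with no completeness assumption. As written, however, your proof is incomplete.
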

\begin{proof}
  By $T_\bbR$ we denote the restriction $T|_{E_\bbR}$ of $T$ to the real
  part of $E_\bbR$ of $E$. We have
  \begin{displaymath}
    \min\{c \geq 0\colon -c\id_E \leq T \leq c\id_E\}
    = \min\{c \geq 0\colon-c\id_{E_\bbR} \leq T_\bbR \leq c\id_{E_\bbR}\}
    = \|T_\bbR\|,
  \end{displaymath}
  where the latter equality can be found in
  \cite[Theorem~3.1.11]{Meyer-Nieberg1991}.  This proves the assertion
  since we clearly have $\|T_\bbR\| \leq \|T\|$.
\end{proof}
\begin{remark}
  We suspect that there is equality in
  Lemma~\ref{lem:norm-of-multiplication-operator} as is true on real
  Banach lattices \cite[Theorem~3.1.11]{Meyer-Nieberg1991}. This is,
  however, not important for our purposes.
\end{remark}
We are now ready to prove
Corollary~\ref{cor:resolvent-small-perturbation}.
\begin{proof}[Proof of Corollary~\ref{cor:resolvent-small-perturbation}]
  According to Theorem~\ref{thm:resolvent-small-perturbation} we can,
  for each sufficiently small $r > 0$, find $\tilde \varepsilon > 0$
  such that for each operator $0 \leq \tilde B \in \calL(E)$ of norm
  $\|\tilde B\| < \tilde \varepsilon$ the following holds: there exists
  exactly one spectral value of $A+\tilde B$ in the disk
  $B(\lambda_0,r/3)$ and no other spectral value in the disk
  $B(\lambda_0, r)$ and the assertions~(b) and~(c) of the theorem are
  fulfilled for this spectral value and for the operator $A + \tilde B$.
	
  Now, define $\varepsilon := \min\{r/3, \tilde \varepsilon/2\}$ and let $B \in
  \calL(E)$ be a real multiplication operator of norm $\|B\| <
  \varepsilon$.  According to
  Lemma~\ref{lem:norm-of-multiplication-operator} we have $\tilde B := B
  + \|B\|\id \geq 0$; moreover, the positive operator $\tilde B$ has norm
  $\|\tilde B\| < \tilde \varepsilon$.
	
  Hence there exists exactly one spectral value of $A + \tilde B$ in the
  disk $B(\lambda_0,r/3)$ and no other spectral value in the disk
  $B(\lambda_0, r)$; furthermore, assertions~(b) and~(c) of
  Theorem~\ref{thm:resolvent-small-perturbation} are fulfilled for this
  spectral value and for the operator $A + \tilde B$.  This implies that
  the operator $A + \tilde B - \|B\|\id = A + B$ has exactly one
  spectral value in the disk $B(\lambda_0, 2r/3)$ and that
  assertions~(b) and~(c) of
  Theorem~\ref{thm:resolvent-small-perturbation} are fulfilled for this
  spectral value and for the operator $A+B$.
\end{proof}

For matrices we can prove a stronger result than
Theorem~\ref{thm:resolvent-small-perturbation}. Relying on the fact that
the set of strongly positive matrices is open in the space of all real
matrices we obtain stability of eventual strong positivity even with
respect to negative perturbations.

\begin{proposition}
  \label{prop:matrix-resolvents}
  The set of matrices in $\bbR^{d\times d}$ having an eigenvalue at
  which its resolvent is eventually strongly positive is open in
  $\bbR^{d\times d}$.
\end{proposition}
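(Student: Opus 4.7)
The plan is to exploit the fact that a strongly positive matrix (one with all entries strictly positive with respect to the standard basis) is an open condition, combined with Perron--Frobenius theory to locate a relevant real eigenvalue of perturbed matrices.

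Let $A \in \bbR^{d\times d}$ have an eigenvalue $\lambda_0$ at which $\Res(\phdot,A)$ is eventually strongly positive; in finite dimensions the individual and uniform notions coincide, so by Proposition~\ref{prop:uniform-eventual-positivity-by-neumann-series}(b) (or directly from the definition) we may choose a real number $\lambda_1>\lambda_0$ with $(\lambda_0,\lambda_1]\subseteq \resSet(A)$ and $\Res(\lambda_1,A)\gg_u 0$ for some strictly positive vector $u\in \bbR^d$. Strong positivity in finite dimensions means that every entry of the matrix $\Res(\lambda_1,A)$ is strictly positive.

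First I would pass to a neighbourhood. Since $\lambda_1\in\resSet(A)$, for every $\tilde A \in \bbR^{d\times d}$ sufficiently close to $A$ we still have $\lambda_1\in\resSet(\tilde A)$, and $\Res(\lambda_1,\tilde A)\to \Res(\lambda_1,A)$ entrywise. By openness of the set of matrices with strictly positive entries, $\Res(\lambda_1,\tilde A)\gg_u 0$ for all $\tilde A$ in a neighbourhood $U$ of $A$.

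Next I would use Perron--Frobenius. For $\tilde A\in U$ the matrix $\Res(\lambda_1,\tilde A)$ has strictly positive entries, so its spectral radius $\rho:=\spr\bigl(\Res(\lambda_1,\tilde A)\bigr)>0$ is a geometrically and algebraically simple eigenvalue, admits a strictly positive eigenvector, and strictly dominates every other eigenvalue in absolute value. Since the eigenvalues of $\Res(\lambda_1,\tilde A)$ are precisely $(\lambda_1-\mu)^{-1}$ for $\mu\in\spec(\tilde A)$, this translates to the existence of a real eigenvalue $\tilde\lambda_0:=\lambda_1-\rho^{-1}<\lambda_1$ of $\tilde A$ such that every other $\mu\in\spec(\tilde A)$ satisfies $|\lambda_1-\mu|>\lambda_1-\tilde\lambda_0$. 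In particular the interval $(\tilde\lambda_0,\lambda_1]$ lies in $\resSet(\tilde A)$.

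Finally I would invoke Proposition~\ref{prop:uniform-eventual-positivity-by-neumann-series}(b), applied to $\tilde A$, $\tilde\lambda_0$, and $\lambda_1$: because $\Res(\lambda_1,\tilde A)\gg_u 0$, we obtain $\Res(\lambda,\tilde A)\gg_u 0$ for every $\lambda\in(\tilde\lambda_0,\lambda_1)$, and strong positivity at $\lambda_1$ itself is already known. Thus $\Res(\phdot,\tilde A)$ is (uniformly) eventually strongly positive at the eigenvalue $\tilde\lambda_0$, which proves that $U$ consists entirely of matrices enjoying the desired property. I expect the only delicate point is the spectral bookkeeping in the Perron--Frobenius step, ensuring that the strict dominance of $\rho$ really does clear the interval $(\tilde\lambda_0,\lambda_1]$ of spectrum of $\tilde A$; everything else is continuity plus the Neumann--series proposition.
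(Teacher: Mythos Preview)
Your argument is correct but follows a genuinely different route from the paper's own proof. The paper invokes the characterisation theorem \cite[Theorem~4.4]{Daners2016} to translate eventual strong positivity of the resolvent at $\lambda_0$ into strong positivity of the spectral projection $P_0$; it then quotes \cite[Proposition~3.1]{Daners2016} to deduce algebraic simplicity of $\lambda_0$, applies Lemma~\ref{lem:eigenvalue-perturbation} to track the simple eigenvalue and its projection $P_B$ under perturbation, and uses the openness of $\{P\gg 0\}$ together with \cite[Theorem~4.4]{Daners2016} once more to close the argument. Your proof instead fixes a point $\lambda_1$ where the resolvent is strongly positive, perturbs the resolvent \emph{at that fixed point}, and then uses the classical Perron--Frobenius theorem for strictly positive matrices applied to $\Res(\lambda_1,\tilde A)$ to locate the relevant real eigenvalue $\tilde\lambda_0$ of $\tilde A$ and clear the interval $(\tilde\lambda_0,\lambda_1]$ of spectrum; Proposition~\ref{prop:uniform-eventual-positivity-by-neumann-series}(b) then propagates strong positivity down to $\tilde\lambda_0$. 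This is more self-contained, since it avoids the external characterisation and simplicity results from \cite{Daners2016} and replaces Lemma~\ref{lem:eigenvalue-perturbation} by Perron--Frobenius plus continuity of the resolvent. The paper's approach, on the other hand, is closer in spirit to the infinite-dimensional Theorem~\ref{thm:resolvent-small-perturbation} and illustrates how the spectral-projection characterisation drives the whole theory. Your worry about the ``spectral bookkeeping'' is unfounded: the strict dominance of $\rho$ gives $|\lambda_1-\mu|>\lambda_1-\tilde\lambda_0$ for every other eigenvalue $\mu$, which immediately excludes $\mu$ from the real interval $(\tilde\lambda_0,\lambda_1]$.
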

\begin{proof}
  Let $A\in\bbR^{d\times d}$ be a matrix having an eigenvalue
  $\lambda_0$ at which $\Res(\phdot,A)$ is eventually strongly positive. By
  \cite[Theorem 4.4]{Daners2016} the corresponding spectral projection
  $P_0$ fulfils $P_0 \gg 0$, by which we mean that every entry of $P_0$
  is strictly positive.  Moreover, according to \cite[Proposition
  3.1]{Daners2016} $\lambda_0$ is the only eigenvalue of $A$ having a
  positive eigenvector, and $\lambda_0$ is algebraically
  simple. Lemma~\ref{lem:eigenvalue-perturbation} implies the existence
  of $\varepsilon_0>0$ such that $A+B$ has an algebraically simple
  eigenvalue $\lambda_B \in \bbR$ near $\lambda_0$ if
  $\|B\|<\varepsilon_0$. Moreover, the corresponding spectral projection
  $P_B$ converges to $P_0$ as $\|B\|\to 0$. Since $P_0\gg 0$ there
  exists $\varepsilon\in(0,\varepsilon_0]$ such that $P_B\gg 0$ whenever
  $\|B\|<\varepsilon$. Now, \cite[Theorem 4.4]{Daners2016} implies that
  $\Res(\phdot,A+B)$ is eventually strongly positive at $\lambda_B$ whenever
  $\|B\|<\varepsilon$. Hence all matrices in the
  $\varepsilon$-neigbourhood of $A$ have an eigenvalue at which
  their resolvent is eventually strongly positive.
\end{proof}

\subsection{A quantitative result}

In this subsection we consider uniform eventual strong positivity of
resolvents at the spectral bound of an operator $A$ and prove a
quantitative perturbation result, meaning that we give an estimate of
how large a positive perturbation may be in norm in order not to destroy
the eventual strong positivity. Eventual positivity at $\spb(A)$ is of
particular importance since it is related to eventual positivity of
$(e^{tA})_{t \geq 0}$ (in case that $A$ is a generator); compare the
perturbation result in
Theorem~\ref{thm:quantitative-sg-perturbation-on-hilbert-space} which we
obtain as a consequence of the perturbation result in the present
subsection.

To formulate the next theorem we need the following notation: For every
operator $A\colon E \supseteq D(A) \to E$ on a complex Banach space $E$
we define the \emph{real spectral bound} $\spb_\bbR(A)$ of $A$ to be the
supremum of all real spectral values of $A$, i.e.\ $\spb_\bbR(A) :=
\sup(\sigma(A) \cap \bbR)$; we clearly have $-\infty \leq \spb_\bbR(A)
\leq \spb(A) \leq \infty$.

\begin{theorem}
  \label{theorem:a-compact-perturbation}
  Let $E\not=\{0\}$ be a complex Banach lattice, let $u \in E$ be a
  quasi-interior point of $E_+$ and let $A$ be a densely defined and
  real linear operator on $E$ such that $\spb(A)$ is a spectral value of
  $A$ and a pole of $\Res(\phdot,A)$.  Suppose there exists $\lambda_1 >
  \spb(A)$ such that $\Res(\lambda,A) \gg_u 0$ for all $\lambda \in
  (\spb(A), \lambda_1)$ and assume that $M := \sup_{\repart \lambda \geq
    \lambda_1} \|\Res(\lambda,A)\| < \infty$.
  
  Then, for every operator $0 \leq K \in \calL(E)$ with norm $\|K\| <
  \frac{1}{M}$ the real spectral bound $\spb_\bbR(A+K)$ fulfils the
  following properties:
  \begin{enumerate}[\upshape (i)]
  \item $\spb_\bbR(A+K) \leq \spb(A+K) < \lambda_1$.
  \item $\Res(\lambda,A+K) \gg_u 0$ for all $\lambda \in (\spb_\bbR(A+K),
    \lambda_1)$.
  \item If $K$ is $A$-compact, then $\spb_\bbR(A+K) \geq \spb(A)$ and
    $\spb_\bbR(A)$ is a pole of $\Res(\phdot,A+K)$.
  \item If $K$ is $A$-compact and non-zero, then $\spb_\bbR(A+K) >
    \spb(A)$.
  \end{enumerate}
\end{theorem}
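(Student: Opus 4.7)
The plan is to treat the four assertions~(i)--(iv) in sequence, leveraging the Neumann series expansion, Proposition~\ref{prop:uniform-eventual-positivity-by-neumann-series}, the second resolvent identity, and, for~(iii)--(iv), analytic Fredholm theory combined with the Daners--Gl\"uck theory of uniform eventual strong positivity. For~(i), the hypothesis $\|K\|\,M<1$ makes the Neumann series $\Res(\lambda,A+K)=\Res(\lambda,A)\sum_{k\ge 0}\bigl(K\Res(\lambda,A)\bigr)^k$ converge on the closed half-plane $\{\repart\lambda\ge\lambda_1\}$, which therefore lies in $\resSet(A+K)$; hence $\spb(A+K)<\lambda_1$. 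For~(ii), continuity of the resolvent together with $\Res(\lambda,A)\gg_u 0$ on $(\spb(A),\lambda_1)$ gives $\Res(\lambda_1,A)\ge 0$, and termwise positivity of the above series then yields $\Res(\lambda_1,A+K)\ge 0$. Proposition~\ref{prop:uniform-eventual-positivity-by-neumann-series}(a), applied to $A+K$ with $\lambda_0=\spb_\bbR(A+K)$, propagates positivity to $\Res(\lambda,A+K)\ge 0$ throughout $(\spb_\bbR(A+K),\lambda_1]$. The second resolvent identity $\Res(\lambda,A+K)=\Res(\lambda,A)+\Res(\lambda,A)K\Res(\lambda,A+K)$ then upgrades this: for real $\lambda\in(\max\{\spb(A),\spb_\bbR(A+K)\},\lambda_1)$ and $f\ge 0$, the correction term is positive, whence $\Res(\lambda,A+K)f\ge\Res(\lambda,A)f\gg_u 0$. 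If $\spb_\bbR(A+K)<\spb(A)$, one more invocation of Proposition~\ref{prop:uniform-eventual-positivity-by-neumann-series}(b) (with $n=1$ and a reference point $\lambda^*\nearrow\lambda_1$) extends strong positivity down to the full range.

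For the inequality $\spb_\bbR(A+K)\ge\spb(A)$ in~(iii), I would argue by contradiction. If $\spb_\bbR(A+K)<\spb(A)$, then $\spb(A)\in\resSet(A+K)$ and $\Res(\phdot,A+K)$ is bounded near $\spb(A)$. The hypotheses produce, via the Daners--Gl\"uck theory, an eigenvector $v\gg_u 0$ of $A$ at $\spb(A)$; applying (ii) and the second resolvent identity on $(\spb(A),\lambda_1)$ yields $\Res(\lambda,A+K)v\ge\Res(\lambda,A)v=v/(\lambda-\spb(A))$, whose norm blows up as $\lambda\searrow\spb(A)$, a contradiction. For the pole assertion, the $A$-compactness of $K$ turns $\lambda\mapsto I-K\Res(\lambda,A)$ into an analytic Fredholm family of index~$0$ on the unbounded component of $\resSet(A)$, invertible for $\repart\lambda$ large by~(i); the analytic Fredholm theorem then forces $\sigma(A+K)\cap\resSet(A)$ to be discrete and to consist of poles of $\Res(\phdot,A+K)$, which handles the case $\spb_\bbR(A+K)>\spb(A)$. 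For the strict inequality~(iv), suppose $K\ne 0$ and $\spb_\bbR(A+K)=\spb(A)$. A refined Fredholm analysis in a punctured neighborhood of $\spb(A)$, together with the blow-up estimate above, shows that $\spb(A)$ is also a pole of $\Res(\phdot,A+K)$; then by~(ii) and Daners--Gl\"uck there is an eigenvector $w\gg_u 0$ of $A+K$ at $\spb(A)$ and a positive eigenfunctional $\varphi_0$ of $A'$ at $\spb(A)$ that is strictly positive on nonzero positive vectors. Pairing $\varphi_0$ against $(A+K)w=\spb(A)w$ produces $\langle\varphi_0,Kw\rangle=0$, hence $Kw=0$. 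Since $w\gg_u 0$ implies $u\in E_w$, one gets $Ku=0$, and density of $E_u$ together with $K\ge 0$ forces $K=0$, a contradiction. Once~(iv) is established, the remaining sub-case $\spb_\bbR(A+K)=\spb(A)$ of the pole assertion of~(iii) is trivial, as it entails $K=0$ and then $A+K=A$.

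The principal obstacle will be the pole analysis at $\spb(A)$ in~(iv): one must rule out an essential singularity of $\Res(\phdot,A+K)$ at a point where $\Res(\phdot,A)$ itself has a pole, and verify the strict positivity of the dual eigenfunctional $\varphi_0$. Both points rely on a careful reading of the characterizations of uniform eventual strong positivity from~\cite{DanersNOTE,Daners2016a}, combined with the meromorphicity afforded by the analytic Fredholm theorem.
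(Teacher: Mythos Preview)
Your treatment of~(i) and~(ii) is correct and close to the paper's, though you take a slightly different path for~(ii): the paper works directly with the factorization $\Res(\lambda,A+K)=\Res(\lambda,A)[I-K\Res(\lambda,A)]^{-1}$ for $\lambda$ just below $\lambda_1$ (where the Neumann series still converges by continuity) to obtain $\Res(\lambda,A+K)\gg_u 0$ immediately, then invokes Proposition~\ref{prop:uniform-eventual-positivity-by-neumann-series}(b). Your detour through $\Res(\lambda_1,A+K)\ge 0$ plus the second resolvent identity works equally well. Your blow-up argument for the inequality $\spb_\bbR(A+K)\ge\spb(A)$ in~(iii) is clean and in fact more direct than the paper's route (which derives it from~(iv)).

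The gap is in~(iv). Your plan assumes $\spb_\bbR(A+K)=\spb(A)$ and seeks to show that $\spb(A)$ is a pole of $\Res(\phdot,A+K)$ via ``refined Fredholm analysis''. But the analytic Fredholm theorem (Steinberg's version, which is what the paper cites) requires an \emph{analytic} compact-operator-valued family; at $\spb(A)$ the family $\lambda\mapsto K\Res(\lambda,A)$ has a pole, so the theorem does not apply there. One could try a meromorphic Fredholm theorem (Gohberg--Sigal type) or exploit that the residue $KP$ has rank one to do an explicit Sherman--Morrison computation for $[I-K\Res(\lambda,A)]^{-1}$ near $\spb(A)$, but neither is in your proposal, and the characterisations from \cite{DanersNOTE,Daners2016a} do not supply the pole property on their own. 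Without the pole, you cannot produce the eigenvector $w\gg_u 0$ needed for the pairing argument.

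The paper avoids this difficulty entirely by a different and more direct route for~(iv): it shows that $A+K$ has a real spectral value strictly between $\spb(A)$ and $\lambda_1$. The key is to study $\spr\bigl(K\Res(\lambda,A)\bigr)$ as a function of $\lambda\in(\spb(A),\lambda_1)$. Since the spectral projection $P=\varphi_0\otimes v$ has $v\gg_u 0$ (a quasi-interior point) and $\varphi_0$ strictly positive, and $K\ne 0$, one has $Kv>0$ and hence $\spr(KP)=\langle\varphi_0,Kv\rangle>0$. Continuity of the spectral radius on compact operators then forces $\spr\bigl(K\Res(\lambda,A)\bigr)>1$ for $\lambda$ close to $\spb(A)$, while it is $<1$ near $\lambda_1$; by the intermediate value theorem there is $\lambda^*\in(\spb(A),\lambda_1)$ with $\spr\bigl(K\Res(\lambda^*,A)\bigr)=1$, and since the spectral radius of a positive operator lies in its spectrum, $I-K\Res(\lambda^*,A)$ fails to be invertible, giving $\lambda^*\in\sigma(A+K)$. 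With~(iv) thus established, the pole assertion in~(iii) follows from the analytic Fredholm theorem on the half-plane $\{\repart z>\spb(A)\}$, where the family is genuinely analytic.
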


Note that the assumption $M < \infty$ in the above theorem is
automatically fulfilled if $A$ generates a $C_0$-semigroup whose
growth bound coincides with $\spb(A)$ (recall
again that the latter is for example fulfilled if $(e^{tA})_{t \geq 0}$
is eventually norm continuous \cite[Corollary~IV.3.11]{Engel2000});
indeed, we have $\sup_{\repart \lambda \geq \tilde \lambda_1}
\|\Res(\lambda,A)\| < \infty$ for every $\tilde \lambda_1 > \spb(A)$ in
this case.

\begin{proof}[Proof of Theorem~\ref{theorem:a-compact-perturbation}]
  Let $0 \leq K \in \calL(H)$ and note that
  \begin{displaymath}
    \lambda I - (A + K) = \bigl[I - K \Res(\lambda,A)\bigr] (\lambda I-A).
  \end{displaymath}
  for each $\lambda \in \rho(A)$. Hence, if $\lambda \in \rho(A)$ and
  $\spr\bigl(K \Res(\lambda,A)\bigr) < 1$, then $\lambda \in \rho(A+K)$ and
  \begin{equation}
    \label{eq:resolvent-perturbation}
    \Res(\lambda,A + K) = \Res(\lambda,A)\bigl[I - K \Res(\lambda,A)\bigr]^{-1}.
  \end{equation}

  (i) Obviously, $\spb_\bbR(A+K) \leq \spb(A+K)$. If $\lambda \in \bbC$
  with $\repart \lambda \geq \lambda_1$ then we have $\spr(K
  \Res(\lambda,A)) \leq \|K \Res(\lambda,A)\| < \frac{1}{M} \|\Res(\lambda,A)\|
  \leq 1$ and thus $\lambda \in \rho(A+K)$. This proves that $\spb(A+K) <
  \lambda_1$.
  
  (ii) As noted in the proof of (i) we have $\|K \Res(\lambda_1,A)\| <
  1$.  Since the mapping $\lambda \mapsto \|K \Res(\lambda,A)\|$ is
  continuous we also have $\spr\bigl(K \Res(\lambda,A)\bigr) \leq \|K
  \Res(\lambda,A)\| < 1$ for all $\lambda \in
  (\lambda_1-\varepsilon,\lambda_1)$ if $\varepsilon > 0$ is chosen
  small enough. Each such $\lambda$ is contained in $\rho(A+K)$ and
  \eqref{eq:resolvent-perturbation} holds. Since $K \Res(\lambda,A)$ is
  positive and has spectral radius $<1$, the inverse $\bigl[I - K
  \Res(\lambda,A)\bigr]^{-1}$ is also positive. We thus have $\bigl[I -
  K \Res(\lambda,A)\bigr]^{-1}f > 0$ for each $f > 0$. As
  $\Res(\lambda,A) \gg_u 0$, formula~\eqref{eq:resolvent-perturbation}
  now yields $\Res(\lambda,A+K) \gg_u 0$. According to 
  Proposition~\ref{prop:uniform-eventual-positivity-by-neumann-series}(b)
  this implies that $\Res(\lambda,A+K) \gg_u 0$ holds in fact for all $\lambda \in
  (\spb_\bbR(A),\lambda_1)$.
  
  (iv) Assume now in addition that $K$ is $A$-compact and non-zero. To
  prove (iv) it suffices to show that $A+K$ has a spectral value
  $\lambda \in (\spb(A), \lambda_1)$.  To this end, let $P \in \calL(E)$
  be the spectral projection of $A$ associated with $\spb(A)$. By
  \cite[Theorem~4.1]{DanersNOTE} and \cite[Corollary~3.3]{Daners2016a}, 
  $P$ is a rank-$1$
  operator which fulfils $P \gg_u 0$ and we have $P = \lim_{\lambda
    \downarrow \spb(A)} (\lambda - \spb(A))\Res(\lambda,A)$ with respect to
  the operator norm. Let us now define a mapping $\gamma\colon [\spb(A),
  \lambda_1) \to \calL(E)$ which is given by
  \begin{displaymath}
    \gamma(\lambda) = 
    \begin{cases}
      \bigl(\lambda - \spb(A)\bigr) K \Res(\lambda,A)
      & \text{if $\lambda > \spb(A)$,} \\
      K P
      & \text{if $\lambda = \spb(A)$.}
    \end{cases}
  \end{displaymath}
  Note that $\gamma$ is continuous with respect to the operator norm and
  that $\gamma(\lambda)$ is a compact, positive operator for every
  $\lambda \in [\spb(A), \lambda_1)$. Moreover, we recall that the
  restriction of the mapping $\calL(E) \to [0,\infty)$, $T \mapsto
  \spr(T)$ to the set of compact operators is continuous with respect to
  the operator norm; this follows e.g.\ from \cite[Remark~IV.3.3 and the
  discussion in Section~IV.3.5]{Kato1976} or from
  \cite[Theorem~2.1(a)]{Degla2008}. Hence, $\spr(\gamma(\phdot))\colon
  [\spb(A), \lambda_1) \to [0,\infty)$ is continuous.
  
  Let us show that $\spr\bigl(\gamma(\spb(A)\bigr) = \spr(KP)>0$. Since
  $P$ has rank $1$ and since $P \gg_u 0$, we can find a strictly
  positive functional $\varphi \in E'$ and a vector $0 \ll_u v \in E$
  such that $P = \varphi \otimes v$ and hence, $KP = \varphi \otimes
  Kv$.  Since $v$ is a quasi-interior point of $E_+$ and $K$ is
  non-zero, it follows that $Kv \neq 0$.  Using that $\varphi$ is
  strictly positive, we deduce that $\langle \varphi, Kv\rangle > 0$ and
  hence $\sigma(KP) = \sigma(\varphi \otimes Kv) \ni \langle \varphi, Kv
  \rangle > 0$.  Thus, $\spr(KP) > 0$.
  
  We conclude that for all sufficiently small $\lambda > \spb(A)$ we have that
  \begin{displaymath}
    \spr\bigl((\lambda-\spb(A))K \Res(\lambda,A)\bigr)
    \geq \frac{\spr(KP)}{2} > 0.
  \end{displaymath}
  We can thus find $\lambda \in (\spb(A), \lambda_1)$ such that
  $\spr\bigl(K\Res(\lambda,A)\bigr) > 1$. On the other hand we have
  $\|K\Res(\lambda_1,A)\| < 1$. Hence we have
  $\spr\bigl(K\Res(\lambda,A)\bigr) \leq \|K\Res(\lambda,A)\| < 1$ for
  all $\lambda \in (\spb(A), \lambda_1)$ which are sufficiently close to
  $\lambda_1$. Using again that the spectral radius is continuous on the
  compact operators with respect to the norm topology
  \cite[Theorem~2.1(a)]{Degla2008} we conclude from the intermediate
  value theorem that $\spr(K\Res(\lambda,A)) = 1$ for some $\lambda \in
  (\spb(A),\lambda_1)$. For this $\lambda$ the operator $\lambda I-A$ is
  invertible, but the operator $I - K \Res(\lambda,A)$ is not since the
  spectral radius of $K \Res(\lambda,A)$ is 
  contained in its spectrum (this is a general fact for positive operators,
  see \cite[Proposition~V.4.1]{Schaefer1974}). Hence, it
  follows from \eqref{eq:perturbed-resolvent} that $\lambda \in
  \sigma(A+K)$.
  
  (iii) Assume that $K$ is $A$-compact. If $K=0$, then assertion (iii)
  is obvious. If $K$ is non-zero, then it follows from (iv) that
  $\spb_\bbR(A+K) > \spb(A)$. We use
  formula~\eqref{eq:resolvent-perturbation} to prove that
  $\spb_\bbR(A+K)$ is a pole of $\Res(\phdot,A+K)$. Let $\Omega := \{z
  \in \bbC\colon \repart z > \spb(A)\}$. On this set, the mappings
  $\lambda \mapsto \Res(\lambda,A)$ and $\lambda \mapsto K
  \Res(\lambda,A)$ are analytic and the latter one takes only compact
  operators as its values. Since $I-K\Res(\lambda,A)$ is invertible for
  at least one $\lambda \in \Omega$, it follows from the so-called
  \emph{Analytic Fredholm Theorem} (see
  e.g.~\cite[Theorem~1]{Steinberg1968/1969}) that $\bigl[I -
  K\Res(\lambda,A)\bigr]^{-1}$ is meromorphic on $\Omega$. Hence,
  $\Res(\phdot,A+K)$ is either analytic at $\spb_\bbR(A+K)$ or it has a
  pole there; yet, since $\spb_\bbR(A+K)$ is, of course, a spectral value of
  $A+K$, the latter alternative must be true.
\end{proof}

\section{Perturbation theorems for semigroups}
\label{section:semigroups}

In this final section we consider perturbations of semigroup generators.
We do however \emph{not} prove theorems of the type ``If $(e^{tA})_{t
  \geq 0}$ is eventually strongly positive, then so is $(e^{t(A+B)})_{t
  \geq 0}$ for appropriate $B$''. Those results would, of course, be
desirable, but it seems to be a difficult task to prove them. Instead we
assume that the resolvent of the semigroup generator $A$ is uniformly
eventually strongly positive at the spectral bound $\spb(A)$. Using the
results of Section~\ref{section:resolvents} we then show that the
resolvent of the perturbed operator $A+B$ is also uniformly eventually
strongly positive at $\spb(A+B)$ and, by means of the characterisation
results in \cite[Sections~4 and~5]{Daners2016a}, this yields
a least \emph{individual} eventual strong positivity of the semigroup 
generated by $A+B$. In case that the underlying space is an $L^2$-space,
one even obtains uniform eventual strong positivity of this semigroup, 
see Theorem~\ref{thm:quantitative-sg-perturbation-on-hilbert-space} below 
and \cite[Theorem~10.2.1]{GlueckDISS}.

\subsection{A qualitative result}
\label{sec:qualitative}
We start again with a subsection containing qualitative perturbation
results.  To prove our main theorems we need the following auxiliary
results. As we did with
Lemma~\ref{lem:eigenvalue-perturbation}, we only
formulate the following result on a complex Banach lattice, although the
proof shows that it is actually true on arbitrary complexifications of
real Banach spaces.

\begin{lemma}
  \label{lem:spectral-bound-perturbation}
  Let $E$ be a complex Banach lattice and let
  $(e^{tA})_{t \geq 0}$ be a real eventually norm continuous
  $C_0$-semigroup on $E$. Suppose furthermore that $\spb(A)$ is a
  dominant spectral value of $A$, a pole of the resolvent and an
  algebraically simple eigenvalue; denote the spectral projection
  associated with $\spb(A)$ by $P_0$. Then there exists an $\varepsilon >
  0$ such that the following properties are fulfilled for every real
  operator $B \in \calL(E)$ with $\|B\| < \varepsilon$:
  \begin{enumerate}[\upshape (a)]
  \item The spectral bound $\spb(A+B)$ of $A+B$ is a dominant spectral
    value of $A+B$, a pole of the resolvent and an algebraically simple
    eigenvalue.
  \item We have $\spb(A+B) \to \spb(A)$ and $P_B \to P_0$ with respect
    to the operator norm as $\|B\| \to 0$; here, $P_B$ denotes the
    spectral projection of $A+B$ associated with the isolated spectral
    value $\spb(A+B)$.
  \end{enumerate}
\end{lemma}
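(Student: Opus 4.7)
The plan is to apply Lemma~\ref{lem:eigenvalue-perturbation} to the algebraically simple pole $\spb(A)$ and then invoke eventual norm continuity to prevent the perturbed eigenvalue produced by that lemma from being overtaken by stray spectral values of $A+B$ far from $\spb(A)$.

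For each sufficiently small $r > 0$, Lemma~\ref{lem:eigenvalue-perturbation} furnishes $\varepsilon_1 > 0$ such that, for every $B \in \calL(E)$ with $\|B\| < \varepsilon_1$, there is a unique $\lambda_B \in B(\spb(A), r) \cap \spec(A+B)$; this $\lambda_B$ is a pole of $\Res(\phdot, A+B)$ and an algebraically simple eigenvalue, and $\lambda_B \to \spb(A)$, $P_B \to P_0$ in operator norm as $\|B\| \to 0$. Since $A$ and $B$ are real and $\spb(A) \in \bbR$, part~(c) of that lemma gives $\lambda_B \in \bbR$. This already delivers~(b) together with the ``pole plus algebraic simplicity'' half of~(a); what remains is to show that $\lambda_B = \spb(A+B)$ and that no other spectral value of $A+B$ has real part equal to $\lambda_B$.

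I would establish this by producing a uniform resolvent bound for $A$ on a suitable unbounded set. First, a spectral gap for $A$: fix $t_0 > 0$ such that $(e^{tA})_{t \geq t_0}$ is norm continuous and set $T := e^{t_0 A}$. The spectral mapping theorem for eventually norm continuous semigroups gives $\spec(T) \setminus \{0\} = \exp\bigl(t_0 \spec(A)\bigr)$. Dominance of $\spb(A)$ ensures that $e^{t_0 \spb(A)}$ is the unique element of $\spec(T)$ of maximal modulus, and the pole assumption allows us to conclude that it is moreover isolated in $\spec(T)$; translating back yields $\delta > 0$ with $\spec(A) \setminus \{\spb(A)\} \subseteq \{\repart z \leq \spb(A) - \delta\}$. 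Next, choose $r < \delta/2$ and put $\Omega := \{\repart z \geq \spb(A) - \delta/2\} \setminus B(\spb(A), r) \subseteq \resSet(A)$. Eventual norm continuity implies $\|\Res(\omega_0 + is, A)\| \to 0$ as $|s| \to \infty$ for any fixed $\omega_0 > \spb(A)$ (see e.g.~\cite[Theorem~II.4.18]{Engel2000}); a resolvent-identity (Neumann) argument then propagates this decay to every vertical line $\repart z = \omega$ with $\omega \in [\spb(A) - \delta/2, \omega_0]$, and combined with the standard decay of $\Res(z, A)$ as $\repart z \to \infty$ this yields $M := \sup_{z \in \Omega} \|\Res(z, A)\| < \infty$.

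Finally, set $\varepsilon := \min(\varepsilon_1, 1/M)$. For $\|B\| < \varepsilon$, Neumann inversion (cf.\ \eqref{eq:perturbed-resolvent}) shows $\Omega \subseteq \resSet(A+B)$, whence $\spec(A+B) \cap \{\repart z \geq \spb(A) - \delta/2\} \subseteq \overline{B(\spb(A), r)}$; combined with the first step, this intersection equals $\{\lambda_B\}$. Once $\|B\|$ is small enough that $\lambda_B > \spb(A) - \delta/2$, this forces $\lambda_B = \spb(A+B)$ and shows that it is a dominant spectral value, completing~(a). The main difficulty is the uniform resolvent bound on $\Omega$: extracting the spectral gap for $A$ from dominance and the pole assumption via the spectral mapping theorem is clean, but propagating the decay of $\Res(\phdot, A)$ along vertical lines from $\omega > \spb(A)$ into the critical strip $\spb(A) - \delta/2 \leq \omega \leq \spb(A)$ is the technical heart of the argument.
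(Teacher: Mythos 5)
Your proof is correct, but it takes a genuinely different route to the uniform resolvent bound on the critical region. The paper decomposes $E = \im P_0 \oplus \ker P_0$ using the spectral projection $P_0$: on $\ker P_0$ the restricted semigroup is eventually norm continuous with spectral bound at most $\spb(A) - 2r$, so by spectral-bound-equals-growth-bound its growth bound is also at most $\spb(A) - 2r$, and the Laplace-transform representation of the resolvent then yields $\sup_{\repart \lambda \geq \spb(A)-r} \|\Res(\lambda, A|_{\ker P_0})\| < \infty$ in one stroke; on the one-dimensional range $\im P_0$ the bound is trivial for $|\lambda - \spb(A)| \geq r$. This bypasses entirely the step you rightly flag as your argument's technical heart, namely propagating the vertical-line decay of $\Res(\phdot,A)$ from $\repart z = \omega_0 > \spb(A)$ leftward into the strip $\spb(A) - \delta/2 \leq \repart z \leq \omega_0$ by a Neumann-ball argument and then invoking compactness for the bounded part of $\Omega$. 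Your propagation step does work (since $\|\Res(\omega_0 + is,A)\| \to 0$ as $|s|\to\infty$, the Neumann ball around $\omega_0 + is$ eventually covers the entire horizontal segment of fixed width, and the leftover compact piece of $\Omega$ is handled by continuity of the resolvent), and both arguments rest on the same two pillars: spectral-bound-equals-growth-bound for eventually norm continuous semigroups and Lemma~\ref{lem:eigenvalue-perturbation}. What the paper's $P_0$-decomposition buys you is a much shorter derivation of the uniform bound; what your version buys you is that you never have to pass to the quotient dynamics on $\ker P_0$, staying throughout with the original resolvent. One small cosmetic point: once you have fixed $r < \delta/2$, the unique spectral value $\lambda_B \in B(\spb(A),r)$ automatically satisfies $\lambda_B > \spb(A) - \delta/2$, so your closing phrase ``once $\|B\|$ is small enough that $\lambda_B > \spb(A) - \delta/2$'' imposes no additional smallness condition beyond $\|B\| < \min(\varepsilon_1, 1/M)$.
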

\begin{proof}
  Since
  $\spb(A)$ is a dominant spectral value and $(e^{tA})_{t \geq 0}$ is
  eventually norm continuous, we can find a number $r > 0$ such that
  $\repart \lambda \leq \spb(A) - 2r$ for all $\lambda \in \sigma(A)
  \setminus \{\spb(A)\}$. The spectral bound of the restriction of $A$
  to the kernel of $P_0$ fulfils $\spb(A|_{\ker P_0}) \leq \spb(A) - 2r$
  and since $(e^{tA}|_{\ker P_0})_{t \geq 0}$ is eventually norm
  continuous, it follows that the growth bound of this restricted
  semigroup is also no larger than $\spb(A)-2r$
  \cite[Corollary~IV.3.11]{Engel2000}. In particular, we obtain from the
  Laplace transform representation of the resolvent that
  \begin{displaymath}
    \sup_{\repart \lambda \geq \spb(A) - r} \|\Res(\lambda,A|_{\ker P_0})\| < \infty.
  \end{displaymath}
  On the other hand,
  \begin{displaymath}
    \sup_{|\lambda - \spb(A)| \geq r} \|\Res(\lambda,A|_{\im P_0})\| < \infty.
  \end{displaymath}
  Hence, $\|\Res(\phdot,A)\|$ is bounded by a constant $C \in (0,\infty)$
  on the set
  \begin{displaymath}
    \overline{\Omega}
    :=\bigl\{\lambda \in \bbC\colon
    \text{$\repart\lambda\geq\spb(A)-r$ and $|\lambda-\spb(A)|\geq r$}\bigr\}.
  \end{displaymath}
  Define $\varepsilon = \frac{1}{C}$ and let $B \in \calL(E)$ with
  $\|B\| < \varepsilon$. According to
  Lemma~\ref{lem:eigenvalue-perturbation}, $A+B$ has a uniquely
  determined spectral value $\lambda_B \in B(r,\spb(A))$, and this
  spectral value $\lambda_B$ is real, a pole of the resolvent $A+B$ and
  an algebraically simple eigenvalue of $A+B$. Moreover, $\lambda_B \to
  \spb(A)$ and $P_B \to P_0$ wit respect to the operator norm as $\|B\|
  \to 0$.
  
  It only remains to show that $A+B$ has no spectral value within the
  set $\overline{\Omega}$ since this implies that $\spb(A+B) =
  \lambda_B$ has the claimed properties. So, let $\lambda \in
  \overline{\Omega}$. Then we have
  \begin{displaymath}
    \lambda - (A+B) = \bigl[I - B\Res(\lambda,A)\bigr](\lambda I-A).
  \end{displaymath}
  Since $\|B\Res(\lambda,A)\| < \varepsilon C = 1$ this operator is
  invertible and hence, $\lambda\in\rho(A+B)$.
\end{proof}

Now we formulate and prove the first main result of this subsection.

\begin{theorem}
  \label{thm:qualitative-perturbation-sg}
  Let $E$ be a complex Banach lattice and let $(e^{tA})_{t \geq 0}$ be a
  real $C_0$-semigroup on $E$. Suppose that $\spb(A)$ is a dominant
  spectral value of $A$ and a pole of the resolvent. Suppose that
  $\Res(\phdot,A)$ is uniformly eventually strongly positive at
  $\spb(A)$ with respect to a quasi-interior point $u$ of $E_+$. Assume
  moreover that at least one of the following assumptions is fulfilled:
  \begin{enumerate}[\upshape (i)]
  \item $(e^{tA})_{t \geq 0}$ is analytic and $D(A) \subseteq E_u$.
  \item $(e^{tA})_{t \geq 0}$ is immediately norm-continuous and $E_u =
    E$.
  \end{enumerate}
  Then there exists an $\varepsilon > 0$ such that for every operator $0
  \leq B \in \calL(E)$ with $\|B\| < \varepsilon$ the semigroup generated
  by $A+B$ is individually eventually strongly positive with respect to
  $u$.
\end{theorem}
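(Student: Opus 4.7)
The plan is to combine Theorem~\ref{thm:resolvent-small-perturbation} (which transports uniform eventual strong positivity of the resolvent through a small positive perturbation) with Lemma~\ref{lem:spectral-bound-perturbation} (which ensures that the perturbed spectral value is still dominant and equal to the spectral bound) and then invoke a characterisation of individual eventual strong positivity of a $C_0$-semigroup from \cite[Sections~4 and~5]{Daners2016a}.

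First, since the generator $A$ of a $C_0$-semigroup is automatically closed and densely defined and $\lambda_0 := \spb(A) \in \bbR$ is by assumption a pole of $\Res(\phdot,A)$ at which the resolvent is uniformly eventually strongly positive with respect to $u$, Theorem~\ref{thm:resolvent-small-perturbation} applies and yields, for each sufficiently small $r > 0$, some $\varepsilon_1 > 0$ such that for every positive $B \in \calL(E)$ with $\|B\| < \varepsilon_1$ the operator $A+B$ admits a unique spectral value $\lambda_B \in B(\spb(A),r)$, this $\lambda_B$ is real, an algebraically simple pole of $\Res(\phdot,A+B)$, and $\Res(\phdot,A+B)$ is uniformly eventually strongly positive with respect to $u$ at $\lambda_B$. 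Next, in either case (i) or (ii) the semigroup $(e^{tA})_{t \geq 0}$ is eventually norm continuous; algebraic simplicity of $\spb(A)$ follows (as in the proof of Theorem~\ref{thm:resolvent-small-perturbation}) from \cite[Theorem~4.2 and Proposition~4.1]{DanersNOTE}; and dominance of $\spb(A)$ is assumed. Hence Lemma~\ref{lem:spectral-bound-perturbation} supplies an $\varepsilon_2 > 0$ such that for $\|B\| < \varepsilon_2$ the spectral bound $\spb(A+B)$ is a dominant, algebraically simple pole of the resolvent and converges to $\spb(A)$ as $\|B\| \to 0$. Choosing $r$ small enough so that the disc $B(\spb(A),r)$ lies inside the region of the complex plane identified by Lemma~\ref{lem:spectral-bound-perturbation}, one identifies $\lambda_B = \spb(A+B)$.

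Third, the regularity hypothesis (i) or (ii) transfers to $A+B$: analytic semigroups remain analytic under bounded perturbation, and $D(A+B) = D(A) \subseteq E_u$ is preserved; in case~(ii), immediate norm continuity persists under bounded perturbation (through the Dyson--Phillips series), and the condition $E_u = E$ is a property of the pair $(E,u)$ alone. Setting $\varepsilon := \min(\varepsilon_1, \varepsilon_2)$, every positive $B$ with $\|B\| < \varepsilon$ therefore meets all the hypotheses of the characterisations of individual eventual strong positivity for semigroups in \cite[Sections~4 and~5]{Daners2016a}: $\spb(A+B)$ is a dominant algebraically simple pole of $\Res(\phdot,A+B)$; the resolvent is uniformly eventually strongly positive with respect to $u$ at that point; and the semigroup enjoys the appropriate regularity together with the correct domain/ideal relation. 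Applying that characterisation to $A+B$ yields the claim.

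The main obstacle is verifying that the external characterisation from \cite{Daners2016a} applies verbatim under our two alternative sets of regularity hypotheses: the analytic/$D(A) \subseteq E_u$ variant and the immediately norm-continuous/$E_u = E$ variant correspond to two slightly different forms of the characterisation, and one must match the hypotheses carefully. The analytic part of the argument---continuity of the spectral data and eventual positivity of the perturbed resolvent---has already been carried out in Section~\ref{section:resolvents} and in Lemma~\ref{lem:spectral-bound-perturbation}, so everything beyond this identification is bookkeeping.
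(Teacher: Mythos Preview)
Your proposal is correct and follows essentially the same approach as the paper: combine Theorem~\ref{thm:resolvent-small-perturbation} with Lemma~\ref{lem:spectral-bound-perturbation} to ensure $\spb(A+B)$ is a dominant, algebraically simple pole at which the resolvent is uniformly eventually strongly positive, transfer the regularity hypothesis (analyticity or immediate norm continuity) to the perturbed semigroup, and then invoke the characterisation from \cite{Daners2016a}. The paper is only slightly more explicit in the final step, spelling out the two concrete hypotheses needed for \cite[Theorem~5.2]{Daners2016a}---that $e^{t(A+B)}E \subseteq E_u$ for $t>0$ and that the rescaled semigroup $(e^{t(A+B-\spb(A+B))})_{t\geq 0}$ is bounded (the latter following from eventual norm continuity and the dominance and first-order pole property of $\spb(A+B)$)---but these are precisely the ``bookkeeping'' items you identify.
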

\begin{proof}
  According to Theorem~\ref{thm:resolvent-small-perturbation} and
  Lemma~\ref{lem:spectral-bound-perturbation} we can find an
  $\varepsilon > 0$ with the following property: for all $0 \leq B \in
  \calL(E)$ with $\|B\| < \varepsilon$ the spectral bound of $A+B$ is a
  dominant and isolated spectral value of $A+B$, an algebraically simple
  eigenvalue and a first order pole of the resolvent. Moreover,
  the resolvent $\Res(\phdot,A+B)$ is uniformly eventually strongly
  positive at $\spb(A+B)$ with respect to $u$. We now see from
  \cite[Theorem~4.2]{DanersNOTE} that the spectral projection $P$
  associated with $\spb(A+B)$ is strongly positive with respect to $u$.
  
  Next we observe that assumptions (i) and (ii) imply that
  $(e^{t(A+B)})_{t \geq 0}$ is eventually (in fact: immediately) norm
  continuous and that $e^{tA}E \subseteq E_u$ for all $t > 0$. Indeed, if
  (i) is fulfilled, then it follows from
  \cite[Proposition~III.1.12(i)]{Engel2000} that the perturbed semigroup
  $(e^{t(A+B)})_{t \geq 0}$ is analytic, too. From $D(A+B) = D(A) \subseteq
  E_u$ we can thus conclude that $e^{t(A+B)}E \subseteq D(A+B) \subseteq
  E_u$ for every $t > 0$. If, on the other hand, (ii) is fulfilled, then
  $(e^{tA})_{t \geq 0}$ is immediately norm continuous according to
  \cite[Theorem~III.1.16(i)]{Engel2000}.  Moreover, we obviously have
  $e^{tA}E \subseteq E = E_u$.
  
  Let us now show that the two properties proved above imply that
  $(e^{t(A+B)})_{t \geq 0}$ is individually eventually strongly positive
  with respect to $u$. Since the perturbed semigroup is eventually norm
  continuous and the spectral bound $\spb(A+B)$ is a dominant spectral value
  of $A+B$ and a first order pole of its resolvent, it
  follows that the rescaled semigroup $(e^{t(A+B - \spb(A+B)I)})_{t \ge
    0}$ is bounded. Since the spectral projection $P$ associated with
  $\spb(A+B)$ is strongly positive with respect to $u$, we conclude from
  the characterisation theorem given in \cite[Theorem~5.2]{Daners2016a}
  that $(e^{t(A+B)})_{t \geq 0}$ is individually eventually strongly
  positive with respect to $u$.
\end{proof}

A typical space where the condition $E_u = E$ in assumption~(ii) of the
above theorem is fulfilled is the space $\Cont(K;\bbC)$ of all
complex-valued continuous functions on a compact Hausdorff space $K$;
this holds independently of the choice of the quasi-interior point $u$.

\begin{examples}
  Let $\Omega \subseteq \bbR^d$ be a bounded domain of class $C^2$. 
  Consider one of the following situations:
  \begin{enumerate}[\upshape (a)]
    \item $E = C_0(\Omega;\bbC)$ (the space of all complex-valued continuous functions
    on $\Omega$ which vanish at the boundary) and $A = -\Delta_D^2$, where $\Delta_D$ denotes
    the Dirichlet Laplace operator on $E$.
    \item $E = C(\overline{\Omega};\bbC)$ and $A = -(\Delta_R^c)^2$, where
      $\Delta_R^c$ denotes the Laplace operator on $E$ with Robin boundary
      conditions (see \cite[Section~6.4]{Daners2016} for details).
  \end{enumerate}
  Then $E$ and $A$ fulfil the assumptions of 
  Theorem~\ref{thm:qualitative-perturbation-sg}. For~(a), this is shown
  in the proof of \cite[Theorem~6.1]{Daners2016a} and for~(b), this
  follows from \cite[Sections~6.3 and~6.4]{Daners2016}. 
\end{examples}

In case that the perturbation $B$ is compact, we can replace
assumption~(ii) in Theorem~\ref{thm:qualitative-perturbation-sg} with a
weaker condition. This is the subject of the next theorem, our second
main result in this section.

\begin{theorem}
  \label{thm:qualitative-perturbation-sg-compact}
  Suppose that the assumptions of
  Theorem~\ref{thm:qualitative-perturbation-sg} are fulfilled, but
  instead of {\upshape (i)} or {\upshape (ii)} assume the following
  condition:
  \begin{enumerate}
  \item[\upshape (iii)] $(e^{tA})_{t \geq 0}$ is eventually norm
    continuous and $E_u = E$.
  \end{enumerate}
  Then there is an $\varepsilon > 0$ such that for every compact
  operator $0 \leq B \in \calL(E)$ with $\|B\| < \varepsilon$ the
  semigroup generated by $A+B$ is individually eventually strongly
  positive with respect to $u$.
\end{theorem}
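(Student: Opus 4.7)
My plan is to mimic the proof of Theorem~\ref{thm:qualitative-perturbation-sg} almost verbatim on the spectral side, and then use the compactness of $B$ to recover the semigroup regularity that was previously ensured by assumptions~(i) or~(ii).

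First I would invoke Theorem~\ref{thm:resolvent-small-perturbation} and Lemma~\ref{lem:spectral-bound-perturbation} to fix an $\varepsilon > 0$ such that, for every $0 \le B \in \calL(E)$ with $\|B\| < \varepsilon$, the spectral bound $\spb(A+B)$ is a dominant, isolated, algebraically simple spectral value of $A+B$ and a first order pole of $\Res(\phdot,A+B)$; moreover $\Res(\phdot,A+B)$ is uniformly eventually strongly positive at $\spb(A+B)$ with respect to $u$. By \cite[Theorem~4.2]{DanersNOTE} the associated spectral projection $P$ is then strongly positive with respect to $u$. This portion of the argument does not use compactness of $B$ at all and is exactly the input coming from Section~\ref{section:resolvents}.

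The remaining task is to verify the hypotheses of \cite[Theorem~5.2]{Daners2016a} for the semigroup generated by $A+B$. Since $E_u = E$ by~(iii), the inclusion $e^{t(A+B)}E \subseteq E_u$ is automatic for every $t > 0$, so only eventual norm continuity of $(e^{t(A+B)})_{t \ge 0}$ remains to be established. This is exactly where the hypothesis that $B$ be compact enters: a classical perturbation result (via the Dyson--Phillips series, see e.g.\ \cite[Corollary~III.1.17]{Engel2000} and the surrounding discussion) shows that a compact bounded perturbation of an eventually norm continuous $C_0$-semigroup is again eventually norm continuous. Given this, the fact that $\spb(A+B)$ is a dominant first order pole forces, together with eventual norm continuity, the growth bound of $(e^{t(A+B)})_{t \ge 0}$ to agree with $\spb(A+B)$ via \cite[Corollary~IV.3.11]{Engel2000}, so the rescaled semigroup $(e^{t(A+B - \spb(A+B)I)})_{t \ge 0}$ is bounded. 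An application of \cite[Theorem~5.2]{Daners2016a} with the strongly positive spectral projection $P$ then yields individual eventual strong positivity of $(e^{t(A+B)})_{t \ge 0}$ with respect to $u$.

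The main obstacle I anticipate is precisely the compactness-preservation step. For a merely bounded real $B$, eventual norm continuity need not persist, which is exactly why this theorem imposes compactness whereas Theorem~\ref{thm:qualitative-perturbation-sg} does not; once that step is settled, everything else reduces to the same functional-analytic machinery already used in Theorem~\ref{thm:qualitative-perturbation-sg}.
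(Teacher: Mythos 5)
Your proposal matches the paper's proof essentially verbatim: both reduce Theorem~\ref{thm:qualitative-perturbation-sg-compact} to Theorem~\ref{thm:qualitative-perturbation-sg}, with the compactness of $B$ supplying eventual norm continuity of the perturbed semigroup in place of hypotheses~(i)/(ii). The only minor divergence is the citation for that step, which the paper attributes to \cite[Proposition~III.1.14]{Engel2000} rather than Corollary~III.1.17.
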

\begin{proof}
  The proof is the same as for
  Theorem~\ref{thm:qualitative-perturbation-sg}. The only difference is
  that we need the compactness of $B$ to conclude that the perturbed
  semigroup $(e^{t(A+B})_{t \geq 0}$ is eventually norm continuous since
  the original semigroup $(e^{tA})_{t \geq 0}$ is only assumed to be
  eventually but not necessarily immediately norm continuous; see
  \cite[Proposition~III.1.14]{Engel2000}
\end{proof}

Let us also comment on perturbation by (non-positive) multiplication
operators:

\begin{corollary}
  The Theorems~\ref{thm:qualitative-perturbation-sg}
  and~\ref{thm:qualitative-perturbation-sg-compact} remain true if we
  replace the assumption of $B$ being positive with the assumption that
  $B$ be a real multiplication operator (where, however, $\varepsilon$
  has to be chosen half as large as in the theorems)
\end{corollary}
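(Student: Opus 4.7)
The plan is to adapt the trick used in the proof of Corollary~\ref{cor:resolvent-small-perturbation}: a real multiplication operator is, up to a scalar shift by $\id$, a positive operator, so we can reduce to the case already handled by Theorems~\ref{thm:qualitative-perturbation-sg} and~\ref{thm:qualitative-perturbation-sg-compact}. Concretely, given the constant $\tilde\varepsilon$ provided by the relevant theorem, I would set $\varepsilon := \tilde\varepsilon/2$ and, for a real multiplication operator $B$ with $\|B\| < \varepsilon$ (compact in the second case), define $\tilde B := B + \|B\|\id$. Lemma~\ref{lem:norm-of-multiplication-operator} yields $\tilde B \ge 0$, and clearly $\|\tilde B\| \le 2\|B\| < \tilde\varepsilon$.

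For the variant derived from Theorem~\ref{thm:qualitative-perturbation-sg} the argument is immediate: the hypotheses (i) and (ii) concern only $A$ and $u$, not the perturbation, so the theorem applies verbatim to $\tilde B$ and gives that $(e^{t(A+\tilde B)})_{t \ge 0}$ is individually eventually strongly positive with respect to $u$. Since $A+B = A+\tilde B - \|B\|\id$, we have $e^{t(A+B)} = e^{-t\|B\|}\, e^{t(A+\tilde B)}$; multiplication by the positive scalar $e^{-t\|B\|}$ preserves the relation $\gg_u 0$, so the conclusion transfers to $(e^{t(A+B)})_{t \ge 0}$.

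The only subtlety—the part I would expect to be the main obstacle—is the variant derived from Theorem~\ref{thm:qualitative-perturbation-sg-compact}, because $\tilde B$ is in general not compact, so the theorem cannot be quoted directly for $\tilde B$. Inspecting its proof, however, the compactness of the perturbation is used solely to ensure that the perturbed semigroup is eventually norm continuous (via \cite[Proposition~III.1.14]{Engel2000}). In our situation, compactness of $B$ already yields that $(e^{t(A+B)})_{t \ge 0}$ is eventually norm continuous by that same proposition, and hence $(e^{t(A+\tilde B)})_{t \ge 0} = (e^{t\|B\|} e^{t(A+B)})_{t \ge 0}$ is eventually norm continuous as well. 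Thus the proof of Theorem~\ref{thm:qualitative-perturbation-sg-compact} goes through with $\tilde B$ in place of $B$, and the same rescaling argument as above transfers individual eventual strong positivity with respect to $u$ from $(e^{t(A+\tilde B)})_{t\ge 0}$ to $(e^{t(A+B)})_{t \ge 0}$.
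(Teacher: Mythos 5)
Your proof follows the same approach as the paper's: form $\tilde B := B + \|B\|\id$, invoke Lemma~\ref{lem:norm-of-multiplication-operator} to get $\tilde B \geq 0$ and $\|\tilde B\| \leq 2\|B\|$, and use the rescaling identity $e^{t(A+B)} = e^{-t\|B\|}e^{t(A+\tilde B)}$.

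Where you go beyond the paper is in the treatment of the variant coming from Theorem~\ref{thm:qualitative-perturbation-sg-compact}. The paper's proof simply states that the corollary follows from the two theorems and the rescaling formula, which implicitly treats Theorem~\ref{thm:qualitative-perturbation-sg-compact} as directly applicable to $\tilde B$. You correctly observe that this is not a black-box application: $\tilde B = B + \|B\|\id$ is not compact (in infinite dimensions) whenever $\|B\| > 0$, so the hypotheses of that theorem are not met by $\tilde B$. Your fix is the right one: inspect the proof and notice that compactness enters only via \cite[Proposition~III.1.14]{Engel2000} to guarantee eventual norm continuity of the perturbed semigroup; compactness of $B$ itself yields eventual norm continuity of $(e^{t(A+B)})_{t\geq 0}$, and the scalar rescaling passes this to $(e^{t(A+\tilde B)})_{t\geq 0}$, so the remainder of the argument applies to $\tilde B$ unchanged. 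This is a genuine subtlety that the paper's very terse proof glosses over, and your version is the more careful one.
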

\begin{proof}
  The operator $\tilde B := B + \|B\|\id$ has norm at most $2\|B\|$ and
  is positive according to
  Lemma~\ref{lem:norm-of-multiplication-operator}.  Hence, the corollary
  follows from Theorems~\ref{thm:qualitative-perturbation-sg}
  and~\ref{thm:qualitative-perturbation-sg-compact} and from the formula
  \begin{displaymath}
    e^{t(A+B)} = e^{-t\|B\|}e^{t(A+\tilde B)}
  \end{displaymath}
  which is true for all $t \geq 0$.
\end{proof}

We can prove a much stronger result than in the above theorems in case
that $E$ is finite dimensional.

\begin{proposition}
  \label{prop:open-generator-set}
  Let $d \in \bbN$, $d\geq 1$. The set of all generators of eventually
  strongly positive $C_0$-semigroups on $\bbC^d$ is an open subset of
  $\bbR^{d\times d}$.
\end{proposition}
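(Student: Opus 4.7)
The plan is to reduce the statement to the characterisation of eventual strong positivity in finite dimensions, combined with \textsc{Lemma}~\ref{lem:spectral-bound-perturbation}. Recall that for a matrix $A \in \bbR^{d \times d}$, the semigroup $(e^{tA})_{t \ge 0}$ is eventually strongly positive if and only if $\spb(A)$ is a dominant eigenvalue of $A$, algebraically simple, and such that the associated spectral projection $P_0$ is strongly positive (i.e.\ has strictly positive entries); this is the finite-dimensional analogue of \cite[Theorem~5.2]{Daners2016a} and appears essentially in \cite[Theorem~7.7]{Daners2016}. So the proposition reduces to showing that each of these three properties is preserved under small real perturbations of $A$.

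First I would fix such a generator $A$, note that $(e^{tA})_{t \ge 0}$ is automatically analytic (hence eventually norm continuous) in finite dimensions, and invoke \textsc{Lemma}~\ref{lem:spectral-bound-perturbation}. This immediately supplies $\varepsilon_1 > 0$ such that for every real $B$ with $\|B\| < \varepsilon_1$, the spectral bound $\spb(A+B)$ is still a dominant spectral value of $A+B$, an algebraically simple eigenvalue, and (via the continuity statement of the same lemma) the associated spectral projection $P_B$ converges to $P_0$ as $\|B\| \to 0$. Two of the three required conditions are thus handled for free.

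Next I would deal with the strong positivity of $P_B$. Since strong positivity of a matrix is simply the condition that every one of its finitely many entries is strictly positive, the set of strongly positive matrices is an open subset of $\bbR^{d \times d}$. Because $P_0 \gg 0$ and $P_B \to P_0$ in operator norm, there exists $\varepsilon \in (0,\varepsilon_1]$ such that $P_B \gg 0$ for every real $B$ with $\|B\| < \varepsilon$. Applying the characterisation cited at the start to $A+B$ then yields that $(e^{t(A+B)})_{t \ge 0}$ is eventually strongly positive, which proves the proposition.

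I do not anticipate a serious obstacle: the main work has been done in \textsc{Lemma}~\ref{lem:spectral-bound-perturbation} (continuous dependence of the dominant eigenvalue and its spectral projection) and in the finite-dimensional characterisation of eventual strong positivity. The only point that deserves a brief comment in the write-up is that algebraic simplicity of $\spb(A+B)$ and the convergence $P_B \to P_0$ already force the perturbed spectral value to lie in $\bbR$, so that one really can talk about the spectral projection at $\spb(A+B)$ rather than at some complex eigenvalue of largest real part; this is ensured by \textsc{Lemma}~\ref{lem:eigenvalue-perturbation}(c), which is built into \textsc{Lemma}~\ref{lem:spectral-bound-perturbation}.
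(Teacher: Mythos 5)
Your proof is correct and follows essentially the same route as the paper's: use the finite-dimensional characterisation of eventual strong positivity (dominant, algebraically simple spectral bound with strictly positive spectral projection), invoke Lemma~\ref{lem:spectral-bound-perturbation} to get persistence of the first two properties plus $P_B \to P_0$ under small real perturbations, and then use openness of strict positivity of matrices to conclude $P_B \gg 0$. The only cosmetic difference is that the paper packages the characterisation via \cite[Corollary~5.6]{Daners2016} together with \cite[Proposition~3.1]{Daners2016} for algebraic simplicity, whereas you roll algebraic simplicity directly into the characterisation statement.
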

\begin{proof}
  Let $A \in \bbC^{d \times d}$ be the generator of an eventually
  strongly positive semigroup. Then obviously, $A \in \bbR^{d \times
    d}$. By the characterisation result in
  \cite[Corollary~5.6]{Daners2016} this implies that $\spb(A)$ is a
  dominant spectral value of $A$ and that the corresponding spectral
  projection $P_0$ has only strictly positive entries. Hence, it follows
  from \cite[Proposition~3.1]{Daners2016} that $\spb(A)$ is an
  algebraically simple eigenvalue of $A$. We now conclude from
  Lemma~\ref{lem:spectral-bound-perturbation} that for all $B \in
  \bbR^{d \times d}$ which are sufficiently small in norm, the spectral
  bound $\spb(A+B)$ is a dominant spectral value of $A+B$.  Moreover,
  the spectral projection $P_B$ corresponding to $\spb(A+B)$ fulfils
  $P_B \to P_0$ as $\|B\| \to 0$. Since $P_B$ is real, it thus
  contains only strictly positive entries whenever $\|B\|$ is
  sufficiently small and thus, we can again employ the characterisation
  result in \cite[Corollary~5.6]{Daners2016} to conclude that the
  semigroup $(e^{t(A+B)})_{t \geq 0}$ is eventually strongly positive for
  all such $B$.
\end{proof}

It is a natural question whether the set of all generators of
strongly positive matrix semigroups $(e^{tA})_{t \geq 0}$ (meaning that
each matrix $e^{tA}$ has only strictly positive entries whenever $t >
0$) is also open in $\bbR^{d \times d}$.  Surprisingly, the answer
depends on the dimension $d$: it is positive if $d = 2$ (and, obviously,
also if $d = 1$), but negative if $d \geq 3$. The details can be found in
the next corollary and the subsequent example.

\begin{corollary}
  The set all generators of strongly positive $C_0$-semigroups on
  $\bbC^2$ is an open subset of $\bbR^{2\times 2}$
\end{corollary}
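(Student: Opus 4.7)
The plan is to reduce the corollary to an explicit entry-wise characterisation of the generators of strongly positive semigroups on $\bbC^2$, and then to observe that the characterising condition defines a manifestly open subset of $\bbR^{2\times 2}$. Concretely, I would prove that a matrix
\begin{displaymath}
A = \begin{pmatrix} a & b \\ c & d \end{pmatrix} \in \bbR^{2\times 2}
\end{displaymath}
generates a strongly positive $C_0$-semigroup on $\bbC^2$ if and only if $b > 0$ and $c > 0$. Since the set of such matrices is evidently open in $\bbR^{2\times 2}$, the corollary follows at once.

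To establish the characterisation, I would first recall the standard Metzler condition for positivity of a finite-dimensional semigroup: $A$ generates a positive semigroup on $\bbC^2$ if and only if $b, c \geq 0$. Thus strong positivity certainly forces $b, c \geq 0$. Next, if one of the off-diagonal entries vanishes—say $b = 0$—then $A$ is lower triangular, hence so is every $e^{tA}$, and its $(1,2)$-entry is identically $0$; this contradicts strong positivity. Symmetrically, $c = 0$ is ruled out, which handles the ``only if'' direction.

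For the converse I would assume $b, c > 0$, choose $\mu \geq 0$ large enough that $N := A + \mu I \geq 0$, and expand
\begin{displaymath}
e^{tA} = e^{-\mu t} \sum_{k=0}^{\infty} \frac{t^k}{k!} N^k.
\end{displaymath}
The $k = 0$ summand contributes a strictly positive number to each diagonal entry, while the $k = 1$ summand contributes $tb > 0$ to the $(1,2)$-entry and $tc > 0$ to the $(2,1)$-entry; all further summands are non-negative. Hence every entry of $e^{tA}$ is strictly positive whenever $t > 0$, as required.

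The main obstacle is conceptual rather than technical: one must recognise that in dimension $2$ the conditions ``$(e^{tA})_{t \geq 0}$ strongly positive'' and ``$A$ Metzler with both off-diagonal entries strictly positive'' coincide. It is precisely this coincidence that makes openness survive here; as the subsequent example is presumably designed to show, the analogous equivalence—and with it the openness of the set of strongly positive generators—breaks down as soon as $d \geq 3$, where strict positivity of $e^{tA}$ can be produced by longer cycles in the underlying digraph without all off-diagonal entries of $A$ themselves being strictly positive.
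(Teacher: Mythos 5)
Your proof is correct, and it takes a genuinely more elementary route than the paper. The paper invokes \cite[Proposition~6.2]{Daners2016}, which says that in dimension $2$ a real matrix generates a strongly positive semigroup if and only if it generates an eventually strongly positive semigroup, and then simply applies Proposition~\ref{prop:open-generator-set}. You instead produce an explicit entry-wise criterion (off-diagonal entries strictly positive), prove it directly from the Metzler characterisation plus the $k=0,1$ terms of the Dyson--Phillips/exponential series, and read off openness by inspection. Your approach is self-contained and gives a concrete picture of the generator set; the paper's approach is shorter once the cited equivalence is in hand and, more importantly, makes the role of that equivalence explicit---it is exactly the ingredient that fails for $d \geq 3$, which is the point of Example~\ref{ex:perturbation-positive-sg}. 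Your closing remark correctly identifies this as the conceptual pivot even though your own argument does not need to invoke eventual positivity at all.
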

\begin{proof}
  The generator of a strongly positive $C_0$-semigroup on $\bbC^2$ is
  obviously a real matrix and it was shown in
  \cite[Proposition~6.2]{Daners2016} that a matrix $A \in \bbR^{2 \times
    2}$ generates a strongly positive $C_0$-semigroup if and only if it
  generates and eventually strongly positive $C_0$-semigroup. Hence, the
  corollary follows from Proposition~\ref{prop:open-generator-set}.
\end{proof}

\begin{example}
  \label{ex:perturbation-positive-sg}
  We showed in Proposition~\ref{prop:open-generator-set} that eventual
  strong positivity of matrix semigroups is robust with respect to
  small, not necessarily positive perturbations. We now give an example
  that this is not the case for strong positivity of the
  semigroup. Consider the generator
  \begin{displaymath}
    A:=\frac{1}{\sqrt{2}}
    \begin{bmatrix}
      0&0&1\\
      0&0&1\\
      1&1&0
    \end{bmatrix}
  \end{displaymath}
  on $\bbC^3$. Then it is easily checked that
  \begin{displaymath}
    A^{2k}=\frac{1}{2}
    \begin{bmatrix}
      1&1&0\\
      1&1&0\\
      0&0&2
    \end{bmatrix}
    \qquad\text{and}\qquad
    A^{2k+1}=A
  \end{displaymath}
  for all $k\geq 1$ and thus
  $e^{tA}=\sum_{k=0}^\infty\frac{t^k}{k!}A^k\gg 0$ for all $t>0$.
  Moreover, $\sigma(A)=\{0,\pm 1\}$, where the eigenspace associated
  with $1$ is spanned by the positive eigenvector $(1,1,\sqrt{2})$. If
  we set
  \begin{displaymath}
    B:=\frac{1}{\sqrt{2}}
    \begin{bmatrix}
      0&-1&0\\
      -1&0&0\\
      0&0&0
    \end{bmatrix},
  \end{displaymath}
  then
  \begin{displaymath}
    A+\varepsilon B=\frac{1}{\sqrt{2}}
    \begin{bmatrix}
      0&-\varepsilon&1\\
      -\varepsilon&0&1\\
      1&1&0
    \end{bmatrix}
  \end{displaymath}
  cannot generate a positive semigroup for any $\varepsilon>0$. However,
  if $\varepsilon$ is small enough, then
  Theorem~\ref{thm:qualitative-perturbation-sg} implies that
  $(e^{t(A+\varepsilon B})_{t\geq 0}$ is eventually strongly positive.
\end{example}
Clearly, the above example can be generalised to any finite dimension $d \ge 3$
by defining
\begin{displaymath}
  A:=\frac{1}{\sqrt{d-1}}
  \begin{bmatrix}
    0      & \dots &    0   & 1      \\
    \vdots &       & \vdots & \vdots \\
    0      & \dots &    0   & 1      \\
    1      & \dots &    1   & 0
  \end{bmatrix} \in \bbC^{d,d}.
\end{displaymath}
Hence, Proposition~\ref{prop:open-generator-set} and the above example 
show that, in dimension $d \geq 3$, eventual strong positivity of 
semigroups is a much more stable concept than strong positivity.

\subsection{A quantitative result}
In this final section we prove a quantitative perturbation theorem for
semigroups.  It is based on the quantitative result about resolvents in
Theorem~\ref{theorem:a-compact-perturbation}. It seems, in general,
unclear how to ensure that the real spectral bound $\spb_\bbR(A+B)$ (see
the discussion before Theorem~\ref{theorem:a-compact-perturbation} for a
definition) is a dominant spectral value of $A+B$ (and thus coincides
with $\spb(A+B)$). For this reason we restrict ourselves to self-adjoint
semigroups and perturbations on Hilbert spaces in the following theorem;
since the spectrum of self-adjoint operators is always real we clearly
have $\spb_\bbR(A+B) = \spb(A+B)$ in case that $A$ and $B$ are
self-adjoint.

\begin{theorem}
  \label{thm:quantitative-sg-perturbation-on-hilbert-space}
  Let $\{0\} \not= H$ be a complex-valued $L^2$-space over an arbitrary
  measure space, let $u \in H_+$ be a quasi-interior point and let
  $(e^{tA})_{t \geq 0}$ be a self-adjoint and real $C_0$-semigroup on $H$
  with $D(A) \subseteq H_u$. Suppose that there is a $\lambda_1 > \spb(A)$
  such that $\Res(\lambda,A) \gg_u 0$ for all $\lambda \in
  (\spb(A),\lambda_1)$.
  
  If $B \in \calL(H)$ is positive and self-adjoint with $\|B\| <
  \lambda_1-\spb(A)$, then the semigroup $(e^{t(A+B)})_{t \geq 0}$ is
  uniformly eventually strongly positive with respect to $u$.
\end{theorem}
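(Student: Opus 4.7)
The plan is to combine the quantitative resolvent perturbation result, Theorem~\ref{theorem:a-compact-perturbation}, with a conversion from uniform eventual positivity of the resolvent to that of the semigroup on $L^2$ via \cite[Theorem~10.2.1]{GlueckDISS}. The first task is to exploit self-adjointness to read the norm constraint on $B$ in the exact form required by the quantitative machinery, and then to verify that the parts of that machinery we need do not depend on any pole hypothesis which we have not assumed.

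First I would compute the constant $M$ from Theorem~\ref{theorem:a-compact-perturbation}. Since $A$ is self-adjoint, $\sigma(A) \subseteq (-\infty, \spb(A)]$, so for every $\lambda \in \bbC$ with $\repart \lambda \geq \lambda_1$ one has $\dist(\lambda, \sigma(A)) \geq \lambda_1 - \spb(A)$. The standard resolvent estimate for self-adjoint operators gives $\|\Res(\lambda, A)\| \leq 1/\dist(\lambda, \sigma(A))$, and hence $M := \sup_{\repart \lambda \geq \lambda_1} \|\Res(\lambda, A)\| \leq 1/(\lambda_1 - \spb(A))$. The assumption $\|B\| < \lambda_1 - \spb(A)$ therefore yields $\|B\| < 1/M$, which is precisely the smallness condition on the perturbation in Theorem~\ref{theorem:a-compact-perturbation}.

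Next I would replay the arguments from parts~(i) and~(ii) of the proof of Theorem~\ref{theorem:a-compact-perturbation} with $K := B$. Crucially, those two parts rely only on the Neumann-series expansion~\eqref{eq:resolvent-perturbation} together with Proposition~\ref{prop:uniform-eventual-positivity-by-neumann-series}(b), neither of which requires that $\spb(A)$ be a pole of $\Res(\phdot, A)$ (that hypothesis is used only in parts~(iii) and~(iv)). The outcome is $\spb(A+B) < \lambda_1$ together with $\Res(\lambda, A+B) \gg_u 0$ for every $\lambda \in (\spb_\bbR(A+B), \lambda_1)$. Because $B$ is bounded and self-adjoint, the sum $A+B$ is self-adjoint with $D(A+B) = D(A) \subseteq H_u$, so $\sigma(A+B) \subseteq \bbR$ and therefore $\spb_\bbR(A+B) = \spb(A+B)$. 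Consequently $\Res(\phdot, A+B)$ is uniformly eventually strongly positive with respect to $u$ at $\spb(A+B)$.

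Finally, to pass from this resolvent statement to the corresponding semigroup statement, I would invoke \cite[Theorem~10.2.1]{GlueckDISS}: on an $L^2$-space, for a self-adjoint generator $A+B$ with $D(A+B) \subseteq H_u$ whose resolvent is uniformly eventually strongly positive with respect to $u$ at the spectral bound, the semigroup $(e^{t(A+B)})_{t \geq 0}$ is itself uniformly eventually strongly positive with respect to $u$. All hypotheses have been verified above, so the conclusion follows. The principal subtlety in the plan is the removal of the pole hypothesis from Theorem~\ref{theorem:a-compact-perturbation}; once one confirms by inspection of its proof that parts~(i) and~(ii) genuinely do not use it, the remainder is straightforward bookkeeping combined with the Hilbert-space characterisation.
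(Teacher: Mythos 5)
Your overall strategy --- compute $M$ via self-adjointness, apply Theorem~\ref{theorem:a-compact-perturbation}, then convert to a semigroup statement via \cite[Theorem~10.2.1]{GlueckDISS} --- is the same as the paper's, and your bound $M \leq 1/(\lambda_1 - \spb(A))$ is correct. However, the ``principal subtlety'' you identify and then try to resolve is a sign that you missed the key first move of the paper's proof, and this creates a genuine gap.

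You observe that Theorem~\ref{theorem:a-compact-perturbation} formally demands that $\spb(A)$ be a pole of $\Res(\phdot,A)$, and you propose to sidestep this by re-running only parts~(i) and~(ii) of its proof. But there is no need to sidestep anything: the hypothesis $D(A) \subseteq H_u$ already forces the pole property. Because the semigroup is self-adjoint it is analytic, hence $e^{tA}H \subseteq D(A) \subseteq H_u$ for every $t > 0$; by \cite[Theorem~2.3(ii)]{DanersNOTE} this makes $e^{tA}$ compact, and analyticity then upgrades eventual compactness to immediate compactness, so $A$ has compact resolvent. In particular $\spb(A)$ is automatically a pole of $\Res(\phdot,A)$, and $B$ is automatically $A$-compact. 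So all four parts of Theorem~\ref{theorem:a-compact-perturbation} are applicable, and part~(iii) is exactly what delivers the crucial extra piece of information you do not obtain from parts~(i)--(ii) alone, namely that $\spb(A+B) = \spb_\bbR(A+B)$ is a pole of $\Res(\phdot,A+B)$.

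This extra information is not cosmetic. The paper's route to \cite[Theorem~10.2.1]{GlueckDISS} goes through \cite[Theorem~4.1]{DanersNOTE}: the pole property of $\spb(A+B)$ is what allows one to form the spectral projection $P$ of $A+B$ at $\spb(A+B)$ and to conclude $P \gg_u 0$ from the uniform eventual strong positivity of the resolvent there. That projection condition (together with $D(A+B) = D(A) \subseteq H_u$) is what the characterisation theorem consumes. Your paraphrase of \cite[Theorem~10.2.1]{GlueckDISS} --- ``resolvent uniformly eventually strongly positive at the spectral bound $\Rightarrow$ semigroup uniformly eventually strongly positive'' --- silently presumes that this resolvent property is enough with no pole or spectral-projection input, which is not how the paper (or the analogous theorems such as \cite[Theorem~5.2]{Daners2016a}) applies such characterisations. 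In short: you should establish compactness of the resolvent from $D(A) \subseteq H_u$ at the outset; this discharges the pole hypothesis, makes $B$ automatically $A$-compact, and gives you the pole of $\Res(\phdot,A+B)$ that the final characterisation step genuinely needs.
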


Note that if the underlying measure space of $H$ is $\sigma$-finite, then a vector
$u \in H_+$ is a quasi-interior point if and only if $u(\omega) > 0$ for 
almost all $\omega$ in the measure space. The fact that we obtain \emph{uniform}
eventual strong positivity for the perturbed semigroup 
in the above theorem is due to a recent result of the authors in the Hilbert space 
case which appeared in the second author's PhD thesis
\cite[Theorem~10.2.1]{GlueckDISS}.

\begin{proof}[Proof of Theorem~\ref{thm:quantitative-sg-perturbation-on-hilbert-space}]
  Since $e^{t_0A}H \subseteq H_u$, it follows from
  \cite[Theorem~2.3(ii)]{DanersNOTE} that $e^{t_0A}$ is
  compact. Therefore, the semigroup $(e^{tA})_{t\geq 0}$ is eventually
  compact, and since it is analytic, it must in fact be immediately
  compact \cite[Exercise~II.4.30(6)]{Engel2000}. Hence, its generator
  $A$ has compact resolvent \cite[Theorem~II.4.29]{Engel2000}. In
  particular, $\spb(A)$ is a pole of $\Res(\phdot,A)$ and $B$ is
  $A$-compact.
  
  We have $M := \sup_{\repart \lambda \geq \lambda_1} \|\Res(\lambda,A)\| =
  \frac{1}{\lambda_1-\spb(A)}$ since $A$ is self-adjoint. Moreover,
  $\spb_\bbR(A+B)$ equals $\spb(A+B)$ since $A+B$ is self-adjoint. It
  therefore follows from Theorem~\ref{theorem:a-compact-perturbation}
  that $\spb(A+B)$ is a pole of $\Res(\phdot,A+B)$ and that $\Res(\phdot,A+B)$
  is uniformly eventually strongly positive at $\spb(A+B)$ with respect
  to $u$. Hence, the spectral projection $P$ associated with the
  spectral value $\spb(A+B)$ of $A+B$ fulfils $P \gg_u 0$ according to
  \cite[Theorem~4.1]{DanersNOTE}. Since $D(A+B) = D(A) \subseteq H_u$ it
  thus follows from the characterisation of eventual positivity in
  \cite[Theorem~10.2.1]{GlueckDISS} that $(e^{t(A+B)})_{t \geq 0}$ is
  uniformly eventually strongly positive with respect to $u$.
\end{proof}

Let us conclude the paper with the following example of a Laplace
operator on $(0,1)$ with non-local boundary conditions. Eventual
positivity properties of the unperturbed operator were discussed in
\cite[Section~6]{Daners2016a} and in \cite[Section~11.7]{GlueckDISS}.

\begin{example}
  Let $H = L^2(0,1)$ and consider the sesqui-linear form $a\colon
  H^1(0,1) \times H^1(0,1) \to \bbC$ which is given by
  \begin{displaymath}
    a(u,v) = \int_0^1 u' \overline{v}' dx +
    \begin{bmatrix}
      u(0) & u(1)
    \end{bmatrix}
    \begin{bmatrix}
      1 & 1 \\
      1 & 1
    \end{bmatrix}
    \begin{bmatrix}
      v(0)\\v(1)
    \end{bmatrix}.
  \end{displaymath}
  The operator $A$ on $H$ associated with $a$ is self-adjoint; it is a
  (negative) Laplace operator with non-local boundary conditions, given
  by
  \begin{displaymath}
    \begin{aligned}
      D(A) & = \bigl\{u \in H^2(0,1)
      \colon u'(0) = -u'(1) = u(0) + u(1)\bigr\}, \\
      A u & = -u''.
    \end{aligned}
  \end{displaymath}
  It was shown in \cite[Theorem~11.7.3]{GlueckDISS} that $\spb(-A) < 0$
  and that the semigroup $(e^{-tA})_{t \geq 0}$ is not positive, but
  uniformly eventually strongly positive with respect to $\one$ 
  (where $\one$ denotes the constant function on $(0,1)$ with value $1$).
  Let us now prove the following assertion:
  
  \emph{If $0 \leq B \in \calL(H)$ is self-adjoint and $\|B\| < 1$, then
    the semigroup generated by $-A+B$ is uniformly eventually
    strongly positive with respect to $\one$.}
  
  \begin{proof}
    Obviously, $u = \one$ is a quasi-interior point of $H_+$. Moreover,
    we have $D(A) \subseteq H^1(0,1) \subseteq L^\infty(0,1) = H_{\one}$. It was shown in
    \cite[Theorem~6.11(i)]{Daners2016a} that $\Res(0,-A) \gg_u 0$ and in
    the proof of this theorem the following formula for $\Res(0,-A)$ was
    given:
    \begin{displaymath}
      (\Res(0,-A)f)(x)
      = \frac{1}{2} \int_0^x \int_y^1  f(z) \, dz \, dy 
      + \frac{1}{2} \int_x^1 \int_0^y f(z) \,  dz \, dy
    \end{displaymath}
    for all $f \in H$ and all $x \in [0,1]$. We want to apply
    Theorem~\ref{thm:quantitative-sg-perturbation-on-hilbert-space} and
    thus we have to estimate the number $-\spb(-A)$. This number
    coincides with the distance of $0$ to $\sigma(-A)$ which is in turn
    equal to $\frac{1}{\|\Res(0,-A)\|}$ since $-A$ is self-adjoint. Hence,
    we have to estimate the norm of $\Res(0,-A)$. Since this is a positive
    operator, we only have to consider positive functions $f \in H$. For
    each such $f$ we have
    \begin{displaymath}
      \|\Res(0,-A)f\|_2 
      \leq \|\Res(0,-A)f\|_\infty 
      \leq \int_0^1 \int_0^1 f(z) \, dz \, dy
      = \|f\|_1 \leq \|f\|_2.
    \end{displaymath}
    Hence, we have $\|\Res(0,-A)\| \leq 1$ and thus $\frac{1}{\|\Res(0,-A)\|}
    \geq 1$. The assertion now follows from
    Theorem~\ref{thm:quantitative-sg-perturbation-on-hilbert-space}.
  \end{proof}
\end{example}


\section{Appendix: Formulas for rank-$1$-perturbations}
\label{appendix:formulas}

Let $A\colon E \supseteq D(A) \to E$ be a linear operator on a complex
Banach space $E$. In this appendix we study what happens to the spectrum
and the resolvent of $A$ if we perturb $A$ by a rank-$1$-operator. If
$A$ generates a $C_0$-semigroup and if the perturbation is, in a sense,
well-adapted to $A$, we also derive a formula for the perturbed
$C_0$-semigroup.

\begin{proposition}
  \label{prop:rank-1-pert-of-resolvent}
  Let $A\colon E \supseteq D(A) \to E$ be a linear
  operator on a complex Banach space $E$ and let $\lambda \in
  \rho(A)$. Moreover, assume that $\varphi \in E'$ and $w \in E$.
  
  Then $\lambda \in \rho(A + \varphi\otimes w)$ if and only if $1 \neq
  \langle \varphi, \Res(\lambda,A) w \rangle$. In this case we have
  \begin{equation}
    \label{eq:resolvent-rank-one-pert}
    \Res(\lambda, A + \varphi \otimes w)
    = \Res(\lambda,A) + \frac{1}{1-\langle\varphi,\Res(\lambda,A)w\rangle}
    \Res(\lambda,A) \, (\varphi\otimes w) \, \Res(\lambda,A).
  \end{equation}
\end{proposition}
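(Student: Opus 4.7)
The plan is to reduce this to the classical Sherman--Morrison identity by factoring out the unperturbed resolvent. Write $R := \Res(\lambda,A)$ and $\alpha := \langle \varphi, Rw\rangle$. On $D(A)$ we have
\begin{equation*}
  \lambda I - (A+\varphi\otimes w) = (\lambda I - A) - \varphi \otimes w
  = (\lambda I - A)\bigl[I - R(\varphi\otimes w)\bigr],
\end{equation*}
where the rank-$1$ operator $R(\varphi\otimes w)$ maps $E$ into $D(A)$, so the bracket is a well-defined operator on $E$. Since $\lambda I - A\colon D(A)\to E$ is bijective, the perturbed operator $\lambda I - (A+\varphi\otimes w)$ is bijective if and only if $I - R(\varphi\otimes w)$ is bijective on $E$. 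This reduces the whole problem to inverting $I$ minus a rank-$1$ operator.

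The key observation is that $R(\varphi\otimes w) = \varphi\otimes Rw$, because $R(\varphi\otimes w)z = \langle\varphi,z\rangle Rw$ for every $z\in E$. Thus I next examine when $I - \varphi\otimes y$ is invertible for a general $y\in E$. Using $(\varphi\otimes y)^2 = \langle\varphi,y\rangle\,\varphi\otimes y$, I would guess the inverse to be $I + \frac{1}{1-\langle\varphi,y\rangle}\,\varphi\otimes y$ and verify by direct multiplication that this is a two-sided inverse whenever $\langle\varphi,y\rangle\neq 1$. Conversely, if $\langle\varphi,y\rangle = 1$, then $(\varphi\otimes y)y = y$, so $I - \varphi\otimes y$ has a non-trivial kernel and fails to be injective. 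Applying this with $y = Rw$ yields exactly the stated invertibility criterion $1\neq\alpha$ for $\lambda\in\rho(A+\varphi\otimes w)$.

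Finally, I would compose the two pieces. From the factorisation,
\begin{equation*}
  \Res(\lambda,A+\varphi\otimes w)
  = \bigl[I - R(\varphi\otimes w)\bigr]^{-1} R
  = \Bigl(I + \tfrac{1}{1-\alpha}\varphi\otimes Rw\Bigr) R
  = R + \tfrac{1}{1-\alpha}(\varphi\otimes Rw)\, R.
\end{equation*}
To match the form stated in~\eqref{eq:resolvent-rank-one-pert}, I would rewrite $(\varphi\otimes Rw)R = R(\varphi\otimes w)R$, which holds since both sides send $z$ to $\langle\varphi,Rz\rangle Rw$. This yields the claimed formula.

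There is no real obstacle here; the calculation is a direct operator-theoretic translation of Sherman--Morrison. The only small subtleties to watch are (i) that the rank-$1$ term $R(\varphi\otimes w)$ takes values in $D(A)$, so all products are defined on the correct domains, and (ii) correctly identifying $R(\varphi\otimes w)$ with $\varphi\otimes Rw$ (not with $\varphi\otimes w$ composed on the wrong side) when applying the rank-$1$ inversion lemma.
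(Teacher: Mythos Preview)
Your proof is correct and follows essentially the same Sherman--Morrison route as the paper. The only cosmetic difference is that the paper factors on the other side, writing $\lambda I - (A+\varphi\otimes w) = \bigl(I - (\varphi\otimes w)\Res(\lambda,A)\bigr)(\lambda I - A)$, and verifies the resolvent formula by direct multiplication rather than deriving it from the rank-$1$ inverse; but the underlying idea is identical.
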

\begin{proof}
  If $\langle \varphi, \Res(\lambda,A) w \rangle\neq 1$, then the right
  hand side of \eqref{eq:resolvent-rank-one-pert} is well defined and,
  using that $(\varphi \otimes w) \, \Res(\lambda,A) \, (\varphi \otimes w) =
  \langle \varphi, \Res(\lambda,A)w \rangle (\varphi \otimes w)$, one can
  check by a simple computation that it is the inverse of $\lambda I -
  (A + \varphi \otimes w)$; this implies that $\lambda \in \rho(A +
  \varphi \otimes w)$ and that the formula holds.
  
  Now, assume that $\lambda \in \rho(A + \varphi \otimes w)$. If
  $w=0$, then clearly $\langle \varphi, \Res(\lambda,A) w
  \rangle=0\neq 1$, so let $w \not= 0$. Since both operators 
  $\lambda I - A$ and
  \begin{displaymath}
    \lambda I - (A + \varphi \otimes w) = \bigl(I - (\varphi \otimes w) 
    \Res(\lambda,A)\bigr) (\lambda I - A)
  \end{displaymath}
  are bijective from $D(A)$ to $E$, it follows that
  $I - (\varphi \otimes w) \Res(\lambda,A)$ is a bijection on $E$; 
  in particular, the latter operator is injective, so
  \begin{displaymath}
  	0 \not= \bigl(I - (\varphi \otimes w) \Res(\lambda,A)\bigr)w =
  	\bigl(1 - \langle \varphi, \Res(\lambda,A)w \rangle\bigr)w.
  \end{displaymath}
  This proves that $\langle \varphi, \Res(\lambda,A)w \rangle \not= 1$.
\end{proof}

If $A$ is a square matrix and $\lambda = 0$, then
\eqref{eq:resolvent-rank-one-pert} is a special case of the
\emph{Sherman--Morrison--Woodbury formula} from numerical analysis; see
\cite[Section~2.1.3]{Golub1989} or \cite[Lemma on p.~68]{Miller1981}.

If $A$ generates a $C_0$-semigroup $(e^{tA})_{t \geq 0}$, then we do not
obtain such a nice formula for the perturbed semigroup $(e^{t(A +
  \varphi \otimes w)})_{t \geq 0}$, in general. However, if $w$ is an
eigenvector of $A$, then an explicit formula for the perturbed semigroup
can be given, and the perturbation formula for the resolvent from the
previous proposition can be considerably simplified.

\begin{proposition}
  \label{prop:rank-1-pert-special-case}
  Let $A\colon E \supseteq D(A) \to E$ be a linear
  operator on a complex Banach space $E$. Let $\varphi \in E'$ and let
  $v \in D(A)$ be an eigenvector of $A$ for an eigenvalue $\lambda_0 \in
  \bbC$.
  \begin{enumerate}[\upshape (a)]
  \item If $\lambda \in \rho(A)$, then $\lambda \in \rho(A + \varphi
    \otimes v)$ if and only if $\lambda - \lambda_0 \not= \langle
    \varphi, v \rangle$. In this case
    \begin{equation}
      \label{eq:resolvent-rank-one-pert-special-case}
      \Res(\lambda, A + \varphi \otimes v)
      = \Res(\lambda,A)+\frac{1}{(\lambda-\lambda_0)-\langle\varphi,v\rangle}
      \bigl( \varphi \otimes v \bigr)\Res(\lambda, A).
    \end{equation}
  \item If $A$ generates a $C_0$-semigroup on $E$ and if
    $\langle\varphi,v\rangle + \lambda_0 \not\in \sigma(A)$, then
    \begin{equation}
      \label{eq:semigroup-rank-one-pert-special-case}
      e^{t(A+ \varphi \otimes v)}
      = e^{tA}+(\varphi\otimes v)
      \bigl(e^{t(\langle\varphi,v\rangle+\lambda_0)}I-e^{tA}\bigr)
      \Res(\langle\varphi,v\rangle+\lambda_0,A)
    \end{equation}
    for all $t \geq 0$.
  \end{enumerate}
\end{proposition}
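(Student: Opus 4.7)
For part~(a), the plan is to specialise Proposition~\ref{prop:rank-1-pert-of-resolvent} to $w := v$ and use that $v$ is an eigenvector. Since $Av = \lambda_0 v$, for every $\lambda \in \rho(A)$ one has $\Res(\lambda,A)v = (\lambda-\lambda_0)^{-1} v$. Consequently $\langle\varphi,\Res(\lambda,A)v\rangle = \langle\varphi,v\rangle/(\lambda-\lambda_0)$, and the condition $\langle\varphi,\Res(\lambda,A)v\rangle \neq 1$ appearing in Proposition~\ref{prop:rank-1-pert-of-resolvent} becomes $\lambda-\lambda_0 \neq \langle\varphi,v\rangle$, which gives the resolvent-set assertion. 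Moreover, for every $z \in E$,
\begin{equation*}
  \Res(\lambda,A)(\varphi\otimes v)\Res(\lambda,A)z
  = \langle\varphi,\Res(\lambda,A)z\rangle\,\Res(\lambda,A)v
  = \frac{1}{\lambda-\lambda_0}\,(\varphi\otimes v)\Res(\lambda,A)z,
\end{equation*}
so that $\Res(\lambda,A)(\varphi\otimes v)\Res(\lambda,A) = (\lambda-\lambda_0)^{-1}(\varphi\otimes v)\Res(\lambda,A)$. Substituting this into formula~\eqref{eq:resolvent-rank-one-pert} and combining the prefactors yields exactly~\eqref{eq:resolvent-rank-one-pert-special-case}.

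For part~(b), the plan is to use the Laplace transform. Since $\varphi\otimes v$ is a bounded operator, bounded perturbation theory guarantees that $A+\varphi\otimes v$ generates a $C_0$-semigroup; both sides of~\eqref{eq:semigroup-rank-one-pert-special-case} are then strongly continuous and exponentially bounded, so they are determined by their Laplace transforms. The Laplace transform of the left-hand side is $\Res(\lambda,A+\varphi\otimes v)$, which by part~(a) equals $\Res(\lambda,A) + ((\lambda-\lambda_0)-\langle\varphi,v\rangle)^{-1}(\varphi\otimes v)\Res(\lambda,A)$ for $\operatorname{Re}\lambda$ large enough. Writing $\mu := \langle\varphi,v\rangle + \lambda_0$, the Laplace transform of the right-hand side is
\begin{equation*}
  \Res(\lambda,A) + (\varphi\otimes v)\Bigl[\tfrac{1}{\lambda-\mu}I - \Res(\lambda,A)\Bigr]\Res(\mu,A),
\end{equation*}
and applying the resolvent identity $\Res(\lambda,A)\Res(\mu,A) = (\lambda-\mu)^{-1}(\Res(\mu,A)-\Res(\lambda,A))$ collapses the bracket to $(\lambda-\mu)^{-1}\Res(\lambda,A)$. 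The two transforms then coincide, and uniqueness of the Laplace transform finishes the proof.

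The only delicate step is keeping track of where each identity is valid: the resolvent equation requires $\lambda,\mu \in \rho(A)$ (which is ensured by the hypothesis $\mu \notin \sigma(A)$ and the choice of $\operatorname{Re}\lambda$ sufficiently large), and the Laplace transform representation $\Res(\lambda,B) = \int_0^\infty e^{-\lambda t}e^{tB}\,dt$ requires $\operatorname{Re}\lambda$ to exceed the growth bound of $(e^{tB})_{t\geq 0}$ for $B \in \{A,\,A+\varphi\otimes v\}$. Once $\operatorname{Re}\lambda$ is chosen large enough to satisfy all of these at once, the algebraic manipulation is routine and no analytic subtleties remain.
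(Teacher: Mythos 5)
Your proof of part~(a) is correct and is essentially the same as the paper's (which simply says it follows from Proposition~\ref{prop:rank-1-pert-of-resolvent}); you merely spell out the elementary computations $\Res(\lambda,A)v = (\lambda-\lambda_0)^{-1}v$ and the cancellation of prefactors, which is exactly what one is meant to do.

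Your proof of part~(b), however, is a genuinely different and correct argument. The paper proceeds ``forwards'': it checks directly that the right-hand side of~\eqref{eq:semigroup-rank-one-pert-special-case} is a strongly continuous one-parameter semigroup, computes that its generator agrees with $A+\varphi\otimes v$ on $D(A)$, and then concludes the generators coincide because both are semigroup generators whose resolvent sets intersect. In particular the paper's argument for~(b) does not use part~(a) at all. You instead go ``backwards'' via Laplace transforms: you compute the Laplace transform of the right-hand side using the elementary transforms of $e^{tA}$ and $e^{t\mu}$ together with the resolvent identity, observe that it matches the formula~\eqref{eq:resolvent-rank-one-pert-special-case} from part~(a) (since $\lambda-\mu = (\lambda-\lambda_0)-\langle\varphi,v\rangle$), and then invoke uniqueness of the Laplace transform for strongly continuous, exponentially bounded operator-valued functions. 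What you gain is that you never have to verify the semigroup law for the right-hand side, and the claim ``it is a semigroup'' (which the paper leaves as a ``direct computation'') comes out automatically afterwards. What you pay is reliance on the Laplace uniqueness theorem and a small bookkeeping argument about where the required half-plane of convergence lives — which you address at the end, correctly noting that all constraints ($\mu\in\rho(A)$, $\lambda\in\rho(A)$, $\lambda\neq\mu$, and $\operatorname{Re}\lambda$ beyond both growth bounds) can be met simultaneously. Both approaches are sound; yours has the structural advantage of actually exploiting part~(a).
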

\begin{proof}
  (a) This follows immediately from
  Proposition~\ref{prop:rank-1-pert-of-resolvent}.
  
  (b) The right hand side of
  \eqref{eq:semigroup-rank-one-pert-special-case} is clearly strongly
  continuous with respect to $t \in [0,\infty)$ and a direct computation
  verifies that it is a semigroup. We denote by $B$ the generator of
  this semigroup. Then one immediately checks that $D(B) \supseteq D(A) =
  D(A +\varphi \otimes v)$ and that $Bf = (A + \varphi \otimes v)f$ for
  all $f \in D(A) = D(A +\varphi \otimes v)$.  Hence, $B$ is an
  extension of $A +\varphi \otimes v$.  Since $A +\varphi \otimes v$ and
  $B$ are both semigroup generators, their resolvent sets have non-empty
  intersection and thus, we must have $B = A + {\varphi \otimes v}$.
\end{proof}

\pdfbookmark[1]{\refname}{biblio}%
\bibliographystyle{doi}%
\bibliography{literature_evpos}

\end{document}